\documentclass[12pt]{amsart}
\usepackage{enumerate}
\usepackage{amsmath}
\usepackage{mathrsfs}
\usepackage{color}
\usepackage{tikz}
\usepackage{extramath}
\usepackage{ericmath}
\usetikzlibrary{arrows.meta,calc,intersections}
\usetikzlibrary{calc}
\usepackage{biblatex}

\usepackage[normalem]{ulem}
\newtheorem{theorem}{Theorem}[section]
\newtheorem{definition}[theorem]{Definition}
\newtheorem{proposition}[theorem]{Proposition}
\newtheorem{lemma}[theorem]{Lemma}
\newtheorem{corollary}[theorem]{Corollary}

\newcommand{\RR}{\mathbb{R}}

\newcommand{\aff}{\mathrm{aff}}
\newcommand{\conv}{\text{conv}}
\newcommand{\affmat}{\mathrm{aff}^{\mathrm{mat}}}
\newcommand{\grlift}[2]{\mathrm{gr}_{#1}(#2)}

\newcommand{\Addresses}{{
    \bigskip
    \footnotesize

    \noindent\textsc{Department of Mathematics, University of Florida, Gainesville, FL, United States}\par\nopagebreak \textit{E-mail address}: \texttt{ericevert@ufl.edu}

    \medskip \medskip
    
    \noindent \textsc{Department of Mathematics, University of Florida, Gainesville, FL, United States}\par\nopagebreak \textit{E-mail address}: \texttt{jack.graham@ufl.edu}
 }}

\begin{document}

\title{Graded face lifts and free extreme points of free spectrahedra}

\author{Eric Evert, Jack Graham}

\begin{abstract}
A free spectrahedron is the matricial solution set of a free linear matrix inequality $L_A(X) = I-A_1 \otimes X_1 - \dots - A_g \otimes X_g \succeq 0$. In this dimension-free setting, free extreme points play the role of classical extreme points. In particular, every bounded real free spectrahedron is the matrix convex hull of its free extreme points. In this qualitative sense, free extreme points of free spectrahedra are abundant. However, quantifications of this abundance have remained elusive. In particular, outside simplices, it is not known whether there exist bounded real free spectrahedra that have finitely many free extreme points. A necessary condition for having finitely many free extreme points is that the classical spectrahedron defined by $L_A(x) \succeq 0$ is a polytope. We strengthen this necessary condition through two constructions. First, we provide a geometric construction of an infinite family of free extreme points at the second level of the maximal matrix convex set over a polygon with at least four sides. Second, we develop a graded face lifting technique for general bounded real free spectrahedra, which allows us to construct free extreme points of the full spectrahedron from free extreme points of the graded face lift. As corollaries, we obtain obstructions to minimal matrix convex sets over polytopes being free spectrahedra and show that much of the scalar extreme-point geometry of maximal matrix convex sets over polytopes can be captured by higher-level free extreme points.
\end{abstract}

\maketitle

\noindent \textit{Keywords}: matrix convex set, free spectrahedron, linear matrix inequality, free extreme points, convex faces, graded lift

\vspace*{.1 in} 

\noindent \textit{MSC 2020:} Primary 47L07. Secondary 47A12, 52A20. 

\tableofcontents

\thispagestyle{empty}

\section{Introduction}

Matrix convex sets and their extreme points play an important role in the theory of operator systems \cite{Wit84,EW97,WW99,KPTT13,HL21,DK25} and free semialgebraic geometry \cite{HM12,HKM16,dOHMP09}. In fact, the canonical example of a (compact) matrix convex set is the set of unital completely positive maps on an operator system \cite{WW99}. In this context, the extreme points form a natural collection that completely norms the operator system \cite{DK15}. In another direction, matrix convex sets and their extreme points play a role in quantum information \cite{BN18,BEKMN+,CDN20,CN21}.

In contrast to classical convex sets, there are several natural notions of extreme points for matrix convex sets. One particularly important class is free extreme points, which are closely connected to Arveson's notion of a boundary representation \cite{A69,KLS14}. In general, it is known that a compact matrix convex set can fail to have free extreme points altogether \cite{Evert17,K+,Pas22}; however, in the case of bounded real free spectrahedra, i.e.,  bounded matrix convex sets defined by a real free linear matrix inequality, free extreme points always exist \cite{E_EXTREMEPOINTS}. In fact, it was shown in \cite{EH19} that a Krein-Milman type theorem holds for free extreme points of free spectrahedra. That is, every bounded real free spectrahedron is equal to the matrix convex hull of its free extreme points.

The results  of \cite{E_EXTREMEPOINTS,EH19} imply that free extreme points of bounded real free spectrahedra are plentiful. However, exactly constructing free extreme points is computationally challenging, and a more precise quantification of this abundance has remained elusive \cite{EEHK22}. For example, about ten years ago it was shown that if $K$ is a free spectrahedron whose first level is a simplex, then $K$ has finitely many free extreme points. In fact, the free extreme points of $K$ are precisely the classical extreme points of the first level of $K$ \cite{FNT17,PSS18}. However, since then, it has remained open whether any other bounded real free spectrahedra have finitely many free extreme points modulo unitary equivalence. An equivalent formulation asks which bounded real free spectrahedra have the property that their free polar dual is also a free spectrahedron \cite{E_EXTREMEPOINTS}. 

A partial result in this direction is that every classical extreme point at the first level  of a real free spectrahedron is a free extreme point \cite{E_EXTREMEPOINTS}; thus, if $K$ is a bounded free spectrahedron with finitely many free extreme points, its first level must be a polytope, hence a polygon in two variables. Inspired by \cite{E_QUADRILATERAL}, which studies maximal matrix convex sets over quadrilaterals, we give a geometric construction of an infinite family of level-two free extreme points for maximal polygons with at least four sides. Moreover, we develop a general mechanism which we call graded lifts of faces that allows us to transport such constructions to more general free spectrahedra. Unlike the planar construction, this graded-face mechanism does not require maximality: in fact, it applies to every bounded real free spectrahedron and does not require the scalar level to be polyhedral. Informally, given a matrix convex set $K$ and a face $F$ contained in the first level of $K$, we define the graded lift of $F$ with respect to $K$, denoted $\grlift{K}{F}$, to be the intersection of $K$ and the matrix affine hull of $F$. See Definition \ref{def:grlift} for the formal definition. We mention that our graded lifts of faces are related to, though distinct from, other notions of faces for matrix convex sets which were considered in \cite{KKM+v1} and \cite{KS22}. See Section \ref{sec:OtherFaces} for further discussion. 

The following is an informal statement of Theorem \ref{theorem:FaceLiftExtreme}. 

{\leftskip=2em
\noindent\textit{Let $K$ be a bounded real free spectrahedron and let $F \subset K(1)$ be a face of the first level of $K$. Then $X \in \grlift{K}{F}$ is a free extreme point of $\grlift{K}{F}$ if and only if $X$ is a free extreme point of $K$.}
\par}

A quick corollary of our results is that if $P \subset \R^h$ is a bounded polytope and $v \in P$ is a vertex of $P$ that lies in a nonsimplicial two-dimensional face, then $v$ is not a crucial free extreme point of the maximal matrix convex set over $P$, see Corollary \ref{cor:NoCrucialFree}. Intuitively, this says that if every vertex of $P$ lies in a nonsimplicial face, then the level 1 geometry of the maximal matrix convex set over $P$ is fully captured by higher-dimensional extreme points. We point the reader to Section \ref{sec:notation} for formal definitions.

We mention that our construction for polygons is closely related to a construction of Netzer which appears in the recent preprint \cite{Netzer26}. Netzer's construction in fact obtains the impressive result that a maximal matrix convex set over a polygon with at least five sides has infinitely many free extreme points at every even level. The techniques used in our polygon construction are related to Netzer's, though Netzer uses a clever rank-one perturbation approach, while our approach highlights the level $1$ geometry. The results of \cite{Netzer26} do imply that a maximal matrix convex set over a bounded polytope must have infinitely many free extreme points provided that the polytope has a two-dimensional face with at least five vertices. However, the construction in three or more variables is not explicit. The differences between the  complementary results of \cite{Netzer26} and the present paper are explained in part by the different underlying questions. On the one hand, \cite{Netzer26} considers subhomogeneity of operator systems, which is connected to the existence of free extreme points at arbitrarily large levels. On the other hand, our work explicitly constructs infinite families of free extreme points, though the extreme points we construct do not have arbitrarily large size. 

\subsection{Guide to the Reader}

Section \ref{sec:notation} introduces our key notation and definitions. Section \ref{sec:PolyExtreme} presents our geometric construction of an infinite family of free extreme points at the second level of a maximal matrix convex set over a polygon with at least four sides. Section \ref{sec:FaceLift} presents graded face lifts, shows that free extreme points of a graded face lift are free extreme points of the full matrix convex set, and gives several corollaries of our constructions. In particular, we show that the minimal matrix convex set over a polytope is rarely a free spectrahedron and that extreme points at level $1$ of a maximal matrix convex set over a polygon are rarely crucial free extreme points. 

\subsection{Acknowledgements}

The authors thank Benjamin Passer for helpful conversations which helped motivate this project. 

The authors used ChatGPT by OpenAI for assistance with exposition and development of some aspects of the mathematical arguments. The authors independently verified all mathematical content and take full responsibility for any errors. 

\section{Notation and definitions}
\label{sec:notation}

We follow the notation and definitions of \cite{EPS26}, which gives a thorough introduction to matrix convex sets and their extreme points. We include a brief account of the definitions and notation for the reader's convenience. 

For a fixed $n \in \mathbb{N}$, we let $SM_n (\R)^g$ denote the collection of $g$-tuples $(X_1,\dots,X_g)$ of $n \times n$ symmetric matrices, and we set $SM(\R)^g = \cup_n SM_n (\R)^g$. Similarly, let $M_{n \times m} (\R)^g$ denote $g$-tuples of $n \times m$ matrices. Given a tuple $X \in M_{n \times m}(\R)^g$, we define $X(i,j):=(X_1(i,j),\dots,X_g(i,j))$, where $X_\ell(i,j)$ is the $(i,j)$th entry of $X_\ell \in M_{n \times m} (\R)$. If $X \in M_{n \times 1} (\R)^g$ or $X \in M_{1 \times n} (\R)^g$, then $X(i):=(X_1(i),\dots,X_g(i))$ is defined analogously. Given tuples $X, Y \in SM_n(\R)^g$, we let $X \sim_u Y$ denote that the tuples $X$ and $Y$ are unitarily equivalent. That is, there exists a unitary $U \in M_{n \times n}(\R)$ such that $U^T X U = Y$. Here $U^T X U = (U^T X_1 U,\dots, U^T X_g U)$.

Given a set $K \subset SM(\R)^g$, we let $K(n) = K\cap SM_n(\R)^g$ denote the $n$th-level of $K$. A matrix convex combination of elements of $K$ is a finite sum of the form
\[
\sum_{\ell=1}^m V_\ell^T X^\ell V_\ell \qquad \mathrm{where} \qquad \sum_{\ell=1}^m V_\ell^T V_\ell = I_n.
\]
Here each $X^\ell \in K (n_\ell)$ and $V_\ell \in M_{n_\ell \times n} (\R)$ for each $\ell$. The matrix convex hull of $K$, denoted $\comat(K)$, is the set of matrix convex combinations of elements of $K$. Say $K$ is matrix convex if $K= \comat(K)$. 

Given a closed convex set $C \subset \R^g$, we let $\wmin(C)$ and $\wmax(C)$ denote the minimal and maximal matrix convex sets over $C$. Informally, these are the unique smallest and largest matrix convex sets over $C$ with respect to inclusion. Formally, $\wmin(C) = \comat(C)$, while $\wmax(C)$ is the collection of all tuples $X \in SM(\R)^g$ that satisfy all affine inequalities that are satisfied by $C$. See \cite{PSS18} for further discussion.

A point $X \in K(n)$ is a free extreme point of a matrix convex set $K$ if whenever $X$ is expressed as a matrix convex combination
 \[
 X = \sum_{\ell=1}^m V_\ell^T Y^\ell V_\ell \qquad \mathrm{with} \qquad  \sum_{\ell=1}^m V_\ell^T V_\ell = I_n \qquad \mathrm{and} \qquad V_\ell \neq 0 \ \mathrm{for \ each \ } \ell, 
 \]
 then for each $\ell$, either $n_\ell = n$ and $X \sim_u Y^\ell$ or $n_\ell > n$ and there exists a $Z^\ell \in K$ such that $X \oplus Z^\ell \sim_u Y^\ell$. Denote the set of free extreme points of $K$ by $\free(K)$. We say $X \in \free K$ is a crucial free extreme point of $K$ if 
 \[
 X \notin \overline{\comat(\free K\backslash\{Y \in K: Y \sim_u X\})}.
 \]
 That is, a crucial free extreme point $X$ cannot be expressed as a limit of matrix convex combinations of free extreme points of $K$ that are not unitarily equivalent to $X$. See also \cite{Passer_2019}. Say $X$ is an Arveson  extreme point of $K$ if 
 \[
\begin{pmatrix}
    X & \beta \\
    \beta^T & \gamma
\end{pmatrix} \in K \qquad \mathrm{implies} \ \beta = 0. 
 \]
 We let $\arv K$ denote the set of Arveson extreme points of $K$. By \cite[Theorem 1.1]{E_EXTREMEPOINTS}, $X \in K$ is a free extreme point of $K$ if and only if $X$ is irreducible and $X$ is an Arveson extreme point of $K$. See also \cite[Theorem 1.2]{EH19} for a statement over the reals.

 Say $K \subset SM(\R)^g$ is closed if $K(n)$ is closed for each $n$. Say $K$ is bounded if there exists a $C > 0$ such that $CI - \sum X_i^2 \succeq 0$ for all $X \in K$. It is well known that if $K$ is matrix convex, then $K$ is bounded if and only if $K(1)$ is bounded, see, e.g., \cite{PSS18}.

 Given a tuple $A \in SM_d (\R)^g$ and a tuple $X \in SM_n (\R)^g$, we let $L_A: SM(\R)^g \to SM(\R)$ denote the free monic linear pencil whose evaluation is defined by
 \[
L_A (X) = I_{dn} -A_1 \otimes X_1 -\dots - A_g \otimes X_g. 
 \]
 In the above, $\otimes$ denotes the Kronecker tensor product. The free spectrahedron $\cD_A$ is the matricial solution set of a free linear matrix inequality $L_A(X) \succeq 0$. That is
 \[
\cD_A = \{X \in SM(\R)^g : L_A (X) \succeq 0\}. 
 \]
 It is straightforward to check that a free spectrahedron is matrix convex. We will make frequent use of the set 
 \[
 \mathcal{C}:=\{X\in SM(\mathbb{R})^2:-I \preceq X_1 \preceq I\text{ and }-I \preceq X_2 \preceq I\}.
 \]
 It is straightforward to verify that $\mathcal{C}$ is a free spectrahedron. In fact, $\mathcal{C}$ is the maximal matrix convex set over the unit square. 

 We will also make use of projective maps of free spectrahedra, as defined in \cite{E_QUADRILATERAL}. We omit a formal definition, as we shall only require properties of these maps that are proved in \cite{E_QUADRILATERAL} together with the fact that the restriction of such a map to the first level of a free spectrahedron is indeed a projective map in the classical sense, see, e.g., \cite{BV04}.  

 We emphasize that the matrix convex sets in this article are considered over the reals. The underlying field plays a significant role in the theory of matrix convex sets and their extreme points \cite{Pas22,BM25II,EP25}.

\subsection{Auxiliary Facts}

In this subsection, we collect several known facts that are useful to have stated explicitly. For the reader's convenience, proofs of these facts are included in the appendix. 

We first give a detailed characterization of the family of ellipses inscribed in $\mathcal{C}(1)$. 

\begin{proposition}\label{prop:EllipseFacts}
    The set of ellipses inscribed in the unit square $\mathcal{C}(1)$ forms a one-parameter family. Taking $0<\theta<\pi$ as the parameter, so that $(1,\cos\theta)$ is a point of tangency with the square, the equation for each ellipse is
    \[x^2-2\cos(\theta)xy+y^2=\sin^2(\theta).\]
    
    The major and minor axes are subsets of the diagonals of the square, and the lengths of the two semi-axes are $\sqrt{2}\cos\left(\frac{\theta}{2}\right)$ and $\sqrt{2}\sin\left(\frac{\theta}{2}\right)$. In particular, $\theta$ may be chosen such that the major axis is contained in a particular diagonal of the square, and the length of the minor semi-axis is arbitrarily small.
\end{proposition}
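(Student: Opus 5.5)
The proof will use the symmetries of the square to pin down the center of an inscribed ellipse, and then impose tangency to the four sides. Write a general ellipse as $Q(x,y)=ax^2+2bxy+cy^2+dx+ey+f=0$ with quadratic part positive definite. Its center is at the origin. To see this, note that an ellipse inscribed in a convex quadrilateral touches all four sides. For a parallelogram, the ellipse together with the square is invariant under the point reflection $(x,y)\mapsto(-x,-y)$. The cleanest argument is to apply an affine map sending the ellipse to a circle. This sends the square to a parallelogram circumscribed about the circle. A parallelogram circumscribed about a circle has its center at the circle's center, since opposite sides are parallel tangent lines, symmetric about the center. So the ellipse's center coincides with the square's center, the origin, and hence $d=e=0$.

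Next I normalize the equation as $x^2+2bxy+cy^2=r$ (the $x^2$ coefficient is nonzero by definiteness) and impose tangency to the lines $x=\pm1$ and $y=\pm1$. Tangency to $x=1$ means the quadratic $cy^2+2by+(1-r)=0$ in $y$ has a double root. This gives $b^2=c(1-r)$, with tangency point $y=-b/c$. Tangency to $y=1$ gives, symmetrically, $b^2=1\cdot(c-r)$, with tangency point $x=-b$. The conditions at $x=-1$ and $y=-1$ follow by central symmetry. From $c(1-r)=c-r$ I get $r(1-c)=0$; since $r>0$, this forces $c=1$. Then $r=1-b^2$, and positivity of $r$ together with definiteness gives $|b|<1$. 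Setting $b=-\cos\theta$ with $0<\theta<\pi$, the tangency point on $x=1$ is $(1,-b)=(1,\cos\theta)$, and the equation becomes $x^2-2\cos\theta\,xy+y^2=\sin^2\theta$. Conversely, every such $\theta$ gives an ellipse (the quadratic form is definite since $|\cos\theta|<1$) tangent to all four sides, with tangency points $(\pm1,\pm\cos\theta)$ and $(\pm\cos\theta,\pm1)$ lying in the square's sides. So the family is exactly parametrized by $\theta$, and distinct $\theta$ give distinct tangency points, hence distinct ellipses.

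For the axes, the matrix $\begin{pmatrix}1&-\cos\theta\\-\cos\theta&1\end{pmatrix}$ has eigenvectors $(1,1)$ and $(1,-1)$, with eigenvalues $1-\cos\theta=2\sin^2(\theta/2)$ and $1+\cos\theta=2\cos^2(\theta/2)$. The axes therefore lie on the diagonals. The semi-axis lengths are $\sqrt{\sin^2\theta/\lambda}$, which gives $\sqrt2\cos(\theta/2)$ along $(1,1)$ and $\sqrt2\sin(\theta/2)$ along $(1,-1)$. Letting $\theta\to0^+$ makes the major axis lie on the diagonal $y=x$ with minor semi-axis $\sqrt2\sin(\theta/2)\to0$. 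Letting $\theta\to\pi^-$ does the same for the other diagonal.

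The main obstacle is the centering step, which rules out $d,e\neq0$. I will handle it by the affine-image-of-a-circle argument above. An alternative is a direct argument: the reflected ellipse is also inscribed and shares the four tangent directions, so I would have to argue uniqueness. The affine-circle route avoids that.
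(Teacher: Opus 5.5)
Your proof is correct, but it takes a different route from the paper. The paper does not derive the family. It cites a theorem of Agarwal, Clifford and Lachance: an ellipse inscribed in a convex quadrilateral is uniquely determined by a single point of tangency. It then checks that $x^2-2\cos\theta\,xy+y^2=\sin^2\theta$ is an ellipse through $(1,\cos\theta)$ that is tangent to all four sides (via implicit differentiation), and concludes by uniqueness. For the axes it again uses that uniqueness: the tangency points $(\pm1,\pm\cos\theta)$ and $(\pm\cos\theta,\pm1)$ are symmetric under reflection in the diagonals, so the reflected ellipse is the same ellipse. The semi-axes are then read off by intersecting with $y=\pm x$.

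You instead classify the inscribed ellipses from scratch. The affine-image-of-a-circle argument places the center at the origin. The two discriminant conditions $b^2=c(1-r)$ and $b^2=c-r$ then force $c=1$ and $r=1-b^2$, which gives existence and uniqueness at once. The axes come from diagonalizing the quadratic form with matrix $\begin{pmatrix}1&-\cos\theta\\-\cos\theta&1\end{pmatrix}$.

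Your version is self-contained and proves the ``one-parameter family'' claim without an outside citation. Its cost is that the centering step uses the fact that the square is a parallelogram; this is harmless here, whereas the cited theorem covers arbitrary quadrilaterals. The paper's version is shorter, given the citation.

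One point you leave implicit in the converse direction is containment in the square. Each tangent line of an ellipse is a supporting line, and the origin lies inside the ellipse. Hence the ellipse lies on the correct side of each of $x=\pm1$ and $y=\pm1$, so tangency alone gives that it is inscribed.
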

\begin{proof}
See Appendix~\ref{sec:EllipseFactsProof}. 
\end{proof}

Our next proposition states that the matrix convex hull of an irreducible tuple $X \in SM_2(\R)^2$ is always a maximal matrix convex set over an ellipse.

\begin{proposition}
\label{prop:Maximald2}
 Let $X \in SM_2 (\R)^2$ be an irreducible tuple. Then $\comat(X)(1)$ is an ellipse and $\comat(X) = \wmax(\comat(X)(1))$.
\end{proposition}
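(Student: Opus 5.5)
The plan is to first identify the scalar level, then show that every tuple satisfying the linear inequalities of that ellipse lies in the matrix convex hull of $X$. The level-1 set is $\comat(X)(1) = \{(\langle X_1 v,v\rangle,\langle X_2 v,v\rangle): \|v\|=1, v\in\R^2\}$, the real joint numerical range of $X$. For $v=(\cos t,\sin t)$, each quadratic form $\langle X_i v, v\rangle$ is an affine function of $(\cos 2t,\sin 2t)$. Hence the joint numerical range is the image of the unit circle under an affine map $\R^2\to\R^2$, that is, an ellipse, possibly degenerate.

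Irreducibility rules out degeneracy. The linear part of this affine map is the matrix whose rows record the traceless parts of $X_1$ and $X_2$ in the basis $\begin{pmatrix}1&0\\0&-1\end{pmatrix}$, $\begin{pmatrix}0&1\\1&0\end{pmatrix}$. If this linear part were singular, some nontrivial combination $aX_1+bX_2$ would be scalar. Then $X_1,X_2$ would be simultaneously diagonalizable: pick $Y\in\operatorname{span}\{X_1,X_2\}$ not scalar, so that every element of the span is a combination of $I$ and $Y$, and diagonalize $Y$. That contradicts irreducibility. So $\comat(X)(1)=E$ is a nondegenerate filled ellipse.

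Next I reduce to a normal form. Matrix convex hulls and $\wmax$ both commute with invertible real affine maps applied entrywise, meaning $X\mapsto (c + M\cdot X)$ with $X_i$ mixed linearly and shifted by scalar multiples of $I$. Both notions are defined through matrix convex combinations and affine inequalities, which are preserved. Using the explicit affine parametrization, I choose an affine map sending $E$ to the unit disk and $X$ to the tuple $\left(\begin{pmatrix}1&0\\0&-1\end{pmatrix}, \begin{pmatrix}0&1\\1&0\end{pmatrix}\right)=:(P,Q)$. It remains to show $\comat(P,Q)=\wmax(\mathbb{D})$, where $\mathbb{D}$ is the closed unit disk. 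The inclusion $\subseteq$ is automatic, since $\comat(P,Q)(1)=\mathbb{D}$ and $\wmax$ is the largest matrix convex set over $\mathbb{D}$.

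The main obstacle is the reverse inclusion: every $Y\in SM_n(\R)^2$ satisfying $\cos\phi\, Y_1+\sin\phi\, Y_2\preceq I$ for all $\phi$ must be a compression $V^T(I_m\otimes (P,Q))V$. I would use the Hahn–Banach separation theorem for matrix convex sets (Effros–Winkler). If $Y\notin\comat(P,Q)$, there is a symmetric monic pencil $L_B$ with $L_B(P,Q)\succeq 0$ but $L_B(Y)\not\succeq 0$. Since $\wmax(\mathbb{D})=\{Y: L_B(Y)\succeq0 \text{ whenever } L_B\succeq 0 \text{ on } \mathbb{D}\}$ (the maximal set is cut out by all pencils that are nonnegative on level one), it suffices to show the following: $L_B(P,Q)\succeq 0$ if and only if $L_B(x)\succeq 0$ for all $x\in\mathbb{D}$. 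Only the forward direction is needed, and it is the heart of the matter.

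Write $L_B(P,Q)=I\otimes I - B_1\otimes P - B_2\otimes Q$ as the $2\times 2$ block matrix $\begin{pmatrix} I-B_1 & -B_2\\ -B_2 & I+B_1\end{pmatrix}$. By the $2\times 2$ real-symmetric/complex correspondence, this block matrix is PSD iff the complex pencil $I - \tfrac12[(B_1 - iB_2)z+(B_1+iB_2)\bar z]\succeq0$ on the circle, or more directly iff $I - (\cos\phi B_1+\sin\phi B_2)\succeq 0$ for all $\phi$. I would verify this equivalence by a Schur-complement/Fejér–Riesz argument: a trigonometric matrix polynomial of degree one nonnegative on the circle factors as $H(z)^*H(z)$ with $H$ linear, and that factorization yields exactly a Gram representation of the block matrix. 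This is the step I expect to require the most care, in particular the real version of matrix Fejér–Riesz. An alternative is to quote the known result that $\wmin(\mathbb{D})$ is the set of real tuples dilating to the spin pair and compare, but I would try the Fejér–Riesz route first.
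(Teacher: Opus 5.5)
Your first three steps (level one is an ellipse, irreducibility rules out degeneracy, and an invertible affine change of variables carries $X$ to $(P,Q)$ and the ellipse to the disk) are essentially the paper's proof. The paper then simply cites \cite{HKMS19} for $\comat(P,Q)=\wmax(\mathbb{B}^2)$. One small slip: $\comat(X)(1)$ is the \emph{convex hull} of the real joint numerical range. For a real $2\times 2$ pair the joint numerical range itself is only the boundary curve; for $(P,Q)$ it is the unit circle.

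The genuine gap is in your proof of that key equality. You invoke the characterization that $\wmax(\mathbb{D})$ consists of all $Y$ with $L_B(Y)\succeq 0$ for every pencil $L_B$ that is positive semidefinite on $\mathbb{D}$. This is false: that set is $\wmin(\mathbb{D})$, whereas $\wmax(\mathbb{D})$ is cut out by scalar pencils only. Take $B=(P,Q)$. Then $L_B(x)$ has eigenvalues $1\pm\|x\|$, so it is positive semidefinite on $\mathbb{D}$. Also $(P,Q)\in\wmax(\mathbb{D})$, because $\cos\phi\,P+\sin\phi\,Q$ has eigenvalues $\pm 1$. Yet $I_4-P\otimes P-Q\otimes Q$ has eigenvalue $-1$; test it on $e_1+e_4$.

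The same example kills the equivalence you planned to establish via Fej\'er--Riesz. We have $I-\cos\phi\,P-\sin\phi\,Q\succeq 0$ for all $\phi$, but $\begin{pmatrix} I-P & -Q\\ -Q & I+P\end{pmatrix}$ is not positive semidefinite. Notice also that the direction you call the heart of the matter, $L_B(P,Q)\succeq 0\Rightarrow L_B\succeq 0$ on $\mathbb{D}$, is trivial, because points of $\mathbb{D}$ are matrix convex combinations of $(P,Q)$. All of the content was hidden in the false description of $\wmax(\mathbb{D})$. Your fallback is also misstated: the tuples dilating to copies of the spin pair form $\comat(P,Q)$, not $\wmin(\mathbb{D})$.

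What the separation argument actually needs is that $L_B(P,Q)\succeq 0$ forces $L_B(Z)\succeq 0$ for every $Z\in\wmax(\mathbb{D})$. Since $L_B(P,Q)\succeq 0$ if and only if $B\in\cD_{(P,Q)}$ (swap the tensor factors), this follows from the dilation statement $\cD_{(P,Q)}\subseteq\wmin(\mathbb{D})$.

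To prove that statement, note that $(P,Q)$ is complex-unitarily equivalent to the Pauli pair $\left(\begin{pmatrix}0&1\\1&0\end{pmatrix},\begin{pmatrix}0&-i\\ i&0\end{pmatrix}\right)$. Hence $L_B(P,Q)\succeq 0$ if and only if $\|B_1+iB_2\|\le 1$ in operator norm. Your circle condition is only the weaker numerical radius bound $w(B_1+iB_2)\le 1$.

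Halmos' unitary dilation of the contraction $B_1+iB_2$ then gives commuting Hermitian $N_1,N_2$ (the real and imaginary parts of a unitary) with joint spectrum on the unit circle that compress to $(B_1,B_2)$. Realifying via $R+iS\mapsto\begin{pmatrix} R&-S\\ S&R\end{pmatrix}$ produces commuting real symmetric dilations. Diagonalizing these exhibits $L_B(Z)$ as a compression of a direct sum of scalar inequalities $I-x_1Z_1-x_2Z_2\succeq 0$ with $x\in\mathbb{D}$, which hold on $\wmax(\mathbb{D})$.

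With this lemma replacing your Fej\'er--Riesz step, the Effros--Winkler argument closes. Otherwise, cite the equality as the paper does.
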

\begin{proof}
    See Appendix \ref{sec:Maximald2Proof}.
\end{proof}

We next give a geometric refinement of the characterization of the free extreme points of $\mathcal{C}$ given in \cite{E_EXTREMEPOINTS}. 

\begin{proposition}\label{prop:SquareExtreme}
  The free square has an infinite family of nonunitarily equivalent free extreme points at its second level. Moreover, $X \in SM_2(\R)^2$ is a free extreme point of the free square $\mathcal{C}$ at level 2 if and only if $\comat(X)(1)$ is an ellipse inscribed in the square. In this case, $\comat(X)$ is the maximal matrix convex set over this inscribed ellipse.
\end{proposition}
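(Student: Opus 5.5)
We work with the Arveson characterization: by \cite[Theorem 1.1]{E_EXTREMEPOINTS}, $X \in \mathcal{C}(2)$ is a free extreme point if and only if $X$ is irreducible and Arveson extreme. The first claim follows from the second together with Proposition \ref{prop:EllipseFacts}. The inscribed ellipses form a one-parameter family indexed by $\theta\in(0,\pi)$. For each $\theta$, pick an irreducible $X^\theta\in SM_2(\R)^2$ with $\comat(X^\theta)(1)$ equal to that ellipse; such a tuple exists because every ellipse is the joint numerical range of a suitable pair of $2\times 2$ symmetric matrices. By Proposition \ref{prop:Maximald2}, $\comat(X^\theta)=\wmax(E_\theta)$. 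Unitary equivalence preserves $\comat(X)(1)$, so distinct $\theta$ give nonequivalent free extreme points.

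\emph{Forward direction.} Suppose $X\in\free(\mathcal{C})(2)$. Then $X$ is irreducible, so by Proposition \ref{prop:Maximald2} the set $E=\comat(X)(1)$ is an ellipse contained in $\mathcal{C}(1)$. Suppose $E$ is not inscribed, i.e. some side of the square, say $x_1=1$, does not touch $E$. Then $\lambda_{\max}(X_1)<1$. We will produce a nonzero $\beta$ with $\begin{pmatrix} X&\beta\\ \beta^T&\gamma\end{pmatrix}\in\mathcal{C}$. If $E$ misses a side, there is slack in that direction. Choose $\beta=(\beta_1,0)$ with $\beta_1\in\R^2$ a small vector and $\gamma=(0,0)$.
\begin{itemize}
\item The second coordinate is $X_2\oplus 0$, which satisfies the constraint trivially.
\item For the first coordinate we need $-I\preceq\begin{pmatrix}X_1&\beta_1\\ \beta_1^T&0\end{pmatrix}\preceq I$. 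This holds for small $\beta_1$ whenever $X_1$ has a strict spectral gap on the relevant side.
\end{itemize}
One side may be touched while its opposite is not. In that case we must choose $\beta_1$ orthogonal to the eigenvector attaining $\pm1$, which is possible in dimension 2. More generally, take $\beta_1$ in $\ker(I-X_1)^\perp\cap\ker(I+X_1)^\perp$. This is nonzero unless both $\pm1$ are eigenvalues of $X_1$, i.e. unless $E$ touches both vertical sides. The same argument applies to $X_2$. So if $X$ is Arveson extreme, $E$ touches all four sides and is inscribed.

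\emph{Converse.} Suppose $E$ is inscribed. Then $X$ is irreducible, since otherwise $\comat(X)(1)$ would be a segment. Each $X_i$ then has eigenvalues exactly $\pm1$, so $X_i=P_i-Q_i$ for rank-one projections $P_i,Q_i$. Suppose $\begin{pmatrix} X&\beta\\ \beta^T&\gamma\end{pmatrix}\in\mathcal{C}$. The compressions to the first block satisfy $X_1=$ corner of a contraction, and the corner attains norm $1$ on both eigenvectors $u_\pm$ of $X_1$. A standard argument then applies: if $\langle Tu,u\rangle=1$ for a contraction $T$ and unit $u$, then $Tu=u$. Hence $\beta_1^T u_\pm=0$. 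Since $u_+,u_-$ span $\R^2$, we get $\beta_1=0$, and likewise $\beta_2=0$. So $X$ is Arveson extreme, hence free extreme.

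Finally, $\comat(X)=\wmax(E)$ follows directly from Proposition \ref{prop:Maximald2}.

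The main obstacle is the first step: cleanly justifying that $E$ touches the side $x_1=\pm1$ exactly when $\pm1$ is an eigenvalue of $X_1$. This follows because $\max_{v}\langle X_1v,v\rangle$ over unit vectors is $\lambda_{\max}(X_1)$, and $\comat(X)(1)$ is the real joint numerical range. The converse argument is remarkably short, so the forward-direction dilation is the delicate part.
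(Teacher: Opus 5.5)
Your proof is correct, but it takes a different route from the paper.

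\textbf{The paper's route.} It imports the characterization from \cite{E_EXTREMEPOINTS} that the free extreme points of $\mathcal{C}$ are exactly the irreducible tuples with $X_1^2=X_2^2=I$. It puts such a $2\times 2$ tuple into the normal form $Y_\theta$, with first entry $\mathrm{diag}(1,-1)$ and second entry a reflection at angle $\theta$. It then reads off the tangency points $(1,\cos\theta)$, $(-1,-\cos\theta)$, $(\cos\theta,1)$, $(-\cos\theta,-1)$ by compressing to diagonal entries. For the converse it combines two facts to force $X\sim_u Y_\theta$: the uniqueness of the inscribed ellipse through a given tangency point (Proposition \ref{prop:EllipseFacts}), and the fact that irreducible tuples with the same matrix convex hull are unitarily equivalent.

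\textbf{Your route.} You use only the general ``irreducible plus Arveson extreme'' characterization. You convert tangency into spectral data via the support function identity $\max_{x\in\comat(X)(1)}x_1=\lambda_{\max}(X_1)$. You then settle Arveson extremality by hand in both directions:
\begin{itemize}
\item when a side is missed, a one-column dilation supported in $\ker(I-X_1)^\perp\cap\ker(I+X_1)^\perp$ breaks extremality;
\item when all four sides are touched, the equality case $u^TTu=\pm 1\Rightarrow Tu=\pm u$ for a contraction $T$ forces $\beta=0$.
\end{itemize}
In effect you reprove the relevant part of the \cite{E_EXTREMEPOINTS} characterization of the free square. Your dilation arguments show at every level that Arveson extremality in $\mathcal{C}$ is equivalent to $X_1^2=X_2^2=I$.

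\textbf{What each buys.} Your argument avoids the normal form $Y_\theta$ and the ``equal hulls implies unitary equivalence'' fact. Proposition \ref{prop:EllipseFacts} is needed only to count the inscribed ellipses. This makes your proof more self-contained and elementary. The paper's route gives an explicit parametrization $\theta\mapsto Y_\theta$ of the level-$2$ free extreme points. You instead need a separate realization step for existence: every nondegenerate ellipse is $\comat(X)(1)$ for some irreducible $X\in SM_2(\R)^2$. That step holds, and your sketch of it is adequate.

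\textbf{One wording correction.} $\comat(X)(1)$ is the set of values $\mathrm{tr}(\rho X)$ over real density matrices $\rho$. That is the convex hull of the real joint numerical range, which for a $2\times 2$ real symmetric pair is only the boundary ellipse. The support-function identity you rely on is unaffected.
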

\begin{proof}
    See Appendix \ref{sec:SquareExtremeProof}.
\end{proof}

We end the section with a lemma showing that projective maps of bounded free spectrahedra preserve convex hulls at the first level.

\begin{lemma}\label{lem:projconvcomb}
    Let $P$ be an invertible projective map defined on a bounded free spectrahedron $\cD_A \subset SM(\R)^h$, and let $\{x^1,\dots,x^k\} \subset \cD_A(1).$ Then 
    \[
    P(\conv(\{x^1,\dots,x^k\})) = \conv(\{P(x^1),\dots,P(x^k)\}).
    \]
    Additionally, $P$ is continuous on $\cD_A(1)$. 
\end{lemma}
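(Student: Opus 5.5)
The plan is to use the classical description of a projective map. At level $1$, $P$ is a projective map in the classical sense, so there are an invertible linear map $M$ of $\R^{h+1}$, a linear functional $\lambda$ and a scalar $c$ such that
\[
P(x) = \frac{Mx + b}{\lambda(x) + c}.
\]
Here $M$ and $b$ together form the affine part, and the denominator $\lambda(x)+c$ does not vanish on $\cD_A(1)$. Since $\cD_A(1)$ is convex and connected, and the denominator is continuous and nonvanishing there, it has constant sign on $\cD_A(1)$. After replacing $(M,b,\lambda,c)$ by their negatives if necessary, we may assume $\lambda(x)+c>0$ on $\cD_A(1)$. Continuity of $P$ on $\cD_A(1)$ is then immediate, since $P$ is a quotient of affine functions with nonvanishing denominator. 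Boundedness of $\cD_A$ is not even needed for this.

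For the inclusion $\subseteq$, take $x = \sum_i t_i x^i$ with $t_i \ge 0$ and $\sum_i t_i = 1$. Write $d_i = \lambda(x^i)+c>0$ and $d = \lambda(x)+c = \sum_i t_i d_i$. Then
\[
P(x) = \frac{\sum_i t_i (Mx^i + b)}{d} = \sum_i \frac{t_i d_i}{d}\, P(x^i).
\]
The weights $s_i = t_i d_i / d$ are nonnegative and sum to $1$, so $P(x) \in \conv(\{P(x^1),\dots,P(x^k)\})$. Note also that $x \in \cD_A(1)$ by convexity, so $P(x)$ is defined.

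For the inclusion $\supseteq$, the cleanest route is to apply the same argument to the inverse map. The inverse $P^{-1}$ is again a projective map with positive denominator on $P(\cD_A(1))$. Since $P(\cD_A)$ is a free spectrahedron by the results of \cite{E_QUADRILATERAL}, its first level $P(\cD_A(1))$ is convex. The points $P(x^i)$ lie in $P(\cD_A(1))$, so the $\subseteq$ argument applied to $P^{-1}$ gives
\[
P^{-1}(\conv\{P(x^i)\}) \subseteq \conv\{x^i\}.
\]
Applying $P$ yields the reverse inclusion.

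Alternatively, one can avoid $P^{-1}$ and invert the weights directly. Given $s_i \ge 0$ with $\sum_i s_i = 1$, set $t_i = (s_i/d_i)\big/\sum_j (s_j/d_j)$. A direct check shows $P\bigl(\sum_i t_i x^i\bigr) = \sum_i s_i P(x^i)$. This makes the reverse inclusion elementary and needs no facts about $P(\cD_A)$.

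The main obstacle is only the sign of the denominator. One has to justify that $\lambda+c$ is nonvanishing on all of $\cD_A(1)$; this is part of what it means for $P$ to be defined on $\cD_A$, as in \cite{E_QUADRILATERAL}. Given that, connectedness of $\cD_A(1)$ gives a constant sign, and the proof reduces to the weight computations above.
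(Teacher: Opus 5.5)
Your proof is correct and is essentially the paper's argument. The paper just cites \cite[Section 2.3.3]{BV04}, where linear-fractional maps with positive denominator are shown to carry segments to segments by the same reweighting trick ($\mu = \theta d_1/(\theta d_1 + (1-\theta)d_2)$); your explicit weights $s_i = t_i d_i/d$ and their inverse are the $k$-point version of that computation, and your continuity argument is the obvious one. One cosmetic slip: the invertible map of $\RR^{h+1}$ is the block matrix $\bigl(\begin{smallmatrix} M & b \\ \lambda^T & c \end{smallmatrix}\bigr)$, not $M$ itself, and invertibility of $M$ alone is neither implied nor needed.
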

\begin{proof}
    Since a projective map on a free spectrahedron is a classical projective map on the first level, the result follows from \cite[Section 2.3.3]{BV04}.
\end{proof}

\section{Level $2$ extreme points of maximal polygons}
\label{sec:PolyExtreme}

We will show that, for a bounded polygon $\mathcal{P}$ with at least 4 sides, there exists a one-parameter family of free extreme points at level 2 of $\mathcal{W}^{\max}(\mathcal{P})$. To do this, we will make use of the geometry of inscribed ellipses and the projective equivalence of quadrilaterals. More specifically, we will extend $\mathcal{P}$ to a sufficiently nice bounded quadrilateral $\mathcal{Q}$ and then use an invertible projective transformation to map $\mathcal{W}^{\max}(\mathcal{Q})$ to the free square. This mapping is motivated by the fact that invertible projective maps respect free extreme points and that the square has infinitely many free extreme points at level $2$. Using the fact that, for the free square, level 1 inscribed ellipses correspond to level 2 free extreme points, we can argue that these free extreme points map back to free extreme points of $\mathcal{W}^{\max}(\mathcal{P})$.

Additionally, our particular construction will enable us to express level 1 extreme points of $\wmax(\mathcal{P})$ as limits of matrix convex combinations of free extreme points. Hence, it will show that level 1 extreme points are not crucial free extreme points.

First, we will briefly review the terminology of convex polygons and polar duality, which will be used in our construction. A bounded convex polygon $\mathcal{P}\subseteq\mathbb{R}^2$ is an intersection of finitely many half-planes. More formally, let $\ell_1,...,\ell_d$ be affine linear functionals on $\mathbb{R}^2$, that is, for $x\in\mathbb{R}^2$,
\[
\ell_j(x)=1-\sum_{k=1}^2\alpha_k^{(j)} x_k
\]
for $j=1,...,d$ and $\alpha_k^{(j)}\in\mathbb{R}$. Then
\[
\mathcal{P}=\{x\in\mathbb{R}^2:\ell_j(x)\geq 0\text{ for }j=1,\dots,d\}
\]
is a convex polygon with $0$ in its interior. We will always assume that $\mathcal{P}$ is bounded. In the bounded setting, the defining equations are irredundant if $ \alpha^{(j)} \notin \conv(\{\alpha^{(k)}\}_{k \neq j})$ for each $j = 1,\dots, d$. Assuming that $\{\ell_j\}_{j=1}^d$ is irredundant, $\mathcal{P}$ is a convex polygon with $d$ sides, commonly called a $d$-gon. If the $\{\ell_j\}$ are redundant, then $\mathcal{P}$ has strictly less than $d$ sides, and can be represented by an irredundant collection of strictly less than $d$ affine linear functionals. For this reason, we always assume the defining set $\{\ell_j\}_{j=1}^d$ is irredundant. 

Note that, if one defines $A\in SM_d(\mathbb{R})^2$ by 
\[A=\left(\begin{pmatrix}
    \alpha_1^{(1)} & & & \\
    & \alpha_1^{(2)} & & \\
    & & \ddots & \\
    & & & \alpha_1^{(d)}
\end{pmatrix},\begin{pmatrix}
    \alpha_2^{(1)} & & & \\
    & \alpha_2^{(2)} & & \\
    & & \ddots & \\
    & & & \alpha_2^{(d)}
\end{pmatrix}\right),\]
then $\cD_A(1) = \mathcal{P}$. In fact, $\cD_A = \wmax(\mathcal{P})$. Furthermore, any $d$-gon with $0$ in its interior can be expressed in this manner.

We say that two defining equations $\ell_i$ and $\ell_j$ of $\mathcal{P}$ with $i \neq j$ are adjacent if there exists an extreme point $x\in \mathcal{P}$ such that $\ell_i(x)=0=\ell_j(x)$. We also say that two extreme points $x^i,x^j\in \mathcal{P}$ with $x^i \neq x^j$ are adjacent if there exists a defining affine functional $\ell_k$ of $\mathcal{P}$ such that $\ell_k(x^i)=0=\ell_k(x^j)$.

We recall that the classical polar dual $\mathcal{P}^\bullet$ of a set $\mathcal{P}\subseteq\mathbb{R}^2$ is
\[
\mathcal{P}^\bullet:=\{x\in\mathbb{R}^2 : \ell_y(x) \geq 0 \ \mathrm{for \ all\ } y \in \mathcal{P}\}.
\]
Here, $\ell_y(x) := 1-\langle x,y \rangle$. It is routine to verify that if $\mathcal{P}$ is a bounded  $d$-gon with extreme points $y^1,\dots,y^d$, one has
\[
\mathcal{P}^\bullet = \{x\in\mathbb{R}^2 : \ell_{y^j}(x) \geq 0 \ \mathrm{for \ all\ } j = 1,\dots,d \}.
\]
Furthermore, if $0$ is in the interior of $\mathcal{P}$, then $\mathcal{P}^\bullet$ is a bounded $d$-gon with $0$ in its interior. Additionally, in this case $(\mathcal{P}^\bullet)^\bullet = \mathcal{P}$. In fact, a routine check shows that a closed convex set is bounded if and only if $0$ is in the interior of its dual. 

Next, we will show that any polygon with at least five sides can be extended into a quadrilateral in a way that preserves a pair of opposite corners. Such an extension will eventually allow us to construct an infinite family of arbitrarily narrow inscribed ellipses.

\begin{lemma}\label{quadextension}
    Let $\mathcal{P}$ be a bounded, convex $d$-gon defined by $\ell_1,...,\ell_d$, with 0 in its interior, and $d\geq5$. Let $\hat{x}\in \mathcal{P}$ be an extreme point of $\mathcal{P}$. Without loss of generality, assume $\ell_1$ and $\ell_2$ satisfy $\ell_1(\hat{x})=0=\ell_2(\hat{x})$.  Then there exist some adjacent $\ell_i$ and $\ell_j$ not equal to $\ell_1$ or $\ell_2$, such that the quadrilateral
    \[\mathcal{Q}:=\{x\in\RR^2:\ell_k(x)\geq0\text{ for }k=1,2,i,j\}\]
    is bounded.
\end{lemma}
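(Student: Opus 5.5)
The plan is to pass to normal vectors and argue with angles. Write $\alpha^{(k)} = (\alpha^{(k)}_1,\alpha^{(k)}_2)$ for the normal of $\ell_k$. Since $0$ is in the interior of $\mathcal{P}$ and $\{\ell_k\}$ is irredundant, the points $\alpha^{(1)},\dots,\alpha^{(d)}$ are exactly the extreme points of the polar dual $\mathcal{P}^\bullet$, which is a bounded $d$-gon with $0$ in its interior.

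\textbf{Step 1: translate the conditions to $\mathcal{P}^\bullet$.} Two defining functionals are adjacent in $\mathcal{P}$ exactly when the corresponding normals are adjacent vertices of $\mathcal{P}^\bullet$. Order the normals by angle about $0$, i.e. $\alpha^{(k)} = r_k(\cos a_k,\sin a_k)$ with $r_k>0$. Since $\ell_1$ and $\ell_2$ are adjacent, we may relabel so that
\[
a_1<a_2<a_3<\dots<a_d<a_1+2\pi.
\]
Because $0$ lies in the interior of $\mathcal{P}^\bullet$, every cyclic gap between consecutive angles is strictly less than $\pi$. Consecutive indices $k,k+1$ then give adjacent functionals.

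\textbf{Step 2: boundedness criterion.} I will use the standard fact that a polyhedron $\{x:\ell_k(x)\ge 0,\ k\in S\}$ is bounded if and only if $0$ lies in the interior of $\conv\{\alpha^{(k)}:k\in S\}$. This is the routine fact about duals recalled before the lemma, applied to the polar of $\mathcal{Q}$. For points ordered by angle, this holds if and only if every cyclic angular gap is $<\pi$. Hence $\mathcal{Q}$ built from $\{1,2,k,k+1\}$, with $3\le k\le d-1$, is bounded as soon as the following three conditions hold:
\[
a_2-a_1<\pi,\qquad a_k-a_2<\pi,\qquad a_{k+1}>a_1+\pi.
\]
The gap condition $a_{k+1}-a_k<\pi$ is automatic from consecutiveness.

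\textbf{Step 3: choosing $k$.} The first condition is automatic. Since $a_1+2\pi-a_d<\pi$, we have $a_d>a_1+\pi$. Let $m\in\{3,\dots,d\}$ be the least index with $a_m>a_1+\pi$; it exists because $a_2<a_1+\pi$.
\begin{enumerate}[(i)]
\item If $m\ge 4$, take $k=m-1\ge 3$. Then $a_k\le a_1+\pi<a_2+\pi$ and $a_{k+1}>a_1+\pi$.
\item If $m=3$, take $k=3$, so $k+1=4$. Here $d\ge 5$ ensures that $4\le d-1$, so $\ell_3,\ell_4$ are distinct from $\ell_1,\ell_2$ and adjacent. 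We have $a_3-a_2<\pi$ as a consecutive gap, and $a_4>a_3>a_1+\pi$.
\end{enumerate}
In either case the chosen pair $\ell_i=\ell_k$, $\ell_j=\ell_{k+1}$ works.

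The main obstacle is Step 2: making the equivalence between boundedness of $\mathcal{Q}$ and the gap condition on $\alpha^{(1)},\alpha^{(2)},\alpha^{(i)},\alpha^{(j)}$ precise. This requires care with strict versus non-strict inequalities and the possibility of collinear normals. I would handle it directly. If some angular gap is $\ge\pi$, there is a nonzero $v$ with $\langle \alpha^{(k)},v\rangle\le 0$ for all four $k$, so the ray $tv$ lies in $\mathcal{Q}$. Conversely, if all gaps are $<\pi$, then every nonzero $v$ has $\langle\alpha^{(k)},v\rangle>0$ for some $k$, so by compactness of the unit circle $\mathcal{Q}$ is bounded.
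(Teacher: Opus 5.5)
Your proof is correct and follows essentially the paper's route. Both arguments pass to the polar dual $\mathcal{P}^\bullet$, take the edge of $\mathcal{P}^\bullet$ lying across the origin from the edge $[\alpha^{(1)},\alpha^{(2)}]$, and shift to the neighboring edge when that edge shares a vertex with $[\alpha^{(1)},\alpha^{(2)}]$. Your angular-gap criterion makes explicit what the paper leaves as ``straightforward'' (the paper certifies that $0$ is interior via a segment $[p,q]$ through $0$), and, as in the paper, case (ii) really only needs $d\ge 4$.
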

\begin{proof}
    Given $\ell_1$ and $\ell_2$, we need to choose two other adjacent defining equations $\ell_i,\ell_j$ such that all four form a bounded quadrilateral. We will accomplish this by considering the polar dual of $\mathcal{P}$. Since $\mathcal{P}$ is a convex, bounded $d$-gon with 0 in its interior, $\mathcal{P}^\bullet$ is also a bounded, convex $d$-gon with 0 in its interior. Let $p^1,...,p^d$ be the extreme points of $\mathcal{P}^\bullet$ such that $p^1$ and $p^2$ correspond to $\ell_1$ and $\ell_2$. That is, $\ell_1(x)=1-\langle x,p^1\rangle$ and $\ell_2(x)=1-\langle x,p^2\rangle$. We can easily check that $p^1$ and $p^2$ are adjacent: 
    \[\ell_{\hat{x}}(p^1)=1-\langle p^1,{\hat{x}}\rangle=1-\langle \hat{x},p^1\rangle=\ell_{1}(\hat{x})=0,\]
    and likewise $\ell_{\hat{x}}(p^2)=0$. We must choose two other adjacent extreme points of $\mathcal{P}^\bullet$, such that the convex hull of all four contains the origin in its interior. In that case, the dual of their convex hull will be the desired bounded quadrilateral.

    It is straightforward to show that, if a point $p\in \mathcal{P}^\bullet$ satisfies $\ell_{\hat{x}}(p) = 0$, then there exists a separate point $q\in\mathcal{P}^\bullet$ and a defining equation $\hat{\ell}\neq \ell_{\hat{x}}$ of $\mathcal{P}^\bullet$ such that $\hat{\ell}(q)=0$ and $0 \in \text{conv}(\{p,q\})$. Moreover, without loss of generality, we can choose $p$ so that $p^1 \neq p \neq p^2$ and so that $q$ is not an extreme point of $\mathcal{P}^\bullet$.
    
    Since $q$ is not an extreme point, there exist two adjacent extreme points $q^1,q^2$ such that $q \in \text{conv}(\{q^1,q^2\})$. Since $0 \in \text{conv}(\{p,q\})$, at least one of $q^1$ and $q^2$ is not equal to $p^1$ or $p^2$. If both are not, then let $\mathcal{Q}=(\text{conv}(\{p^1,p^2,q^1,q^2\}))^\bullet$. If one is, say $q^1=p^1$, then let $q^3\neq q^1$ be the other extreme point adjacent to $q^2$. We know that $q^3\neq p^2$, since otherwise $\mathcal{P}^\bullet$ would be a triangle, and consequently $\mathcal{P}$ would have been a triangle. In this case, let $\mathcal{Q}=(\text{conv}(\{p^1,p^2,q^2,q^3\}))^\bullet$.
    
    In either case, $\mathcal{Q}$ is the dual of a quadrilateral that contains 0 in its interior, so $\mathcal{Q}$ is a bounded quadrilateral. Moreover, the two defining equations of $\mathcal{Q}$ that are not $\ell_1$ or $\ell_2$ are adjacent to each other, since they are defined by adjacent extreme points of the dual. Hence $\mathcal{Q}$ has the desired properties.
\end{proof}

We next give a simple lemma which shows that free extreme points of a matrix convex set $K$ remain free extreme if they are elements of a matrix convex subset of $K$.\

\begin{lemma}
\label{lem:ExtremeOfSubset}
Let $K_1 \subseteq  K_2$ be matrix convex sets and suppose $ X \in K_1$ and that $X \in \free K_2$. Then $X \in \free K_1$. 
\end{lemma}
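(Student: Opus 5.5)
The plan is to argue directly from the definition of a free extreme point, using only the fact that every matrix convex combination of elements of $K_1$ is also a matrix convex combination of elements of $K_2$. Alternatively, one can argue through the characterization \cite[Theorem 1.1]{E_EXTREMEPOINTS}: a point is free extreme if and only if it is irreducible and Arveson extreme. I will sketch both routes and favor the second, since it is cleaner.

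\textbf{Via the Arveson characterization.} Since $X \in \free K_2$, the tuple $X$ is irreducible and $X \in \arv K_2$. Irreducibility is intrinsic to the tuple $X$ and does not depend on the ambient set. For the Arveson condition, suppose
\[
\begin{pmatrix} X & \beta \\ \beta^T & \gamma \end{pmatrix} \in K_1 .
\]
Because $K_1 \subseteq K_2$, this dilation also lies in $K_2$. Arveson extremeness of $X$ in $K_2$ then forces $\beta = 0$, so $X \in \arv K_1$. Since $X \in K_1$ is irreducible and Arveson extreme in $K_1$, \cite[Theorem 1.1]{E_EXTREMEPOINTS}, or its real version \cite[Theorem 1.2]{EH19}, gives $X \in \free K_1$.

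\textbf{Via the definition.} Suppose $X = \sum_\ell V_\ell^T Y^\ell V_\ell$ with $Y^\ell \in K_1 \subseteq K_2$, $\sum_\ell V_\ell^T V_\ell = I$ and each $V_\ell \neq 0$. This is also such an expression in $K_2$, so for each $\ell$ either $Y^\ell \sim_u X$, or $Y^\ell \sim_u X \oplus Z^\ell$ for some $Z^\ell \in K_2$. The definition of $\free K_1$ requires $Z^\ell \in K_1$, so this is the one step needing care. It follows from matrix convexity of $K_1$: $Z^\ell$ is a compression of $U^T Y^\ell U$ to a coordinate subspace, where $U$ is the implementing unitary. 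Concretely, writing $U^T Y^\ell U = X \oplus Z^\ell$, we have $Z^\ell = W^T U^T Y^\ell U W$ for the isometry $W$ onto the last block. This is a matrix convex combination of the single element $Y^\ell \in K_1$, hence $Z^\ell \in K_1$.

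The only potential obstacle is this closure step, namely checking that the complementary summand $Z^\ell$ lies in the smaller set. It is handled by closure of $K_1$ under isometric compressions and unitary conjugation, both of which are special cases of matrix convex combinations.
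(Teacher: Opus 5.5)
Your proposal is correct, and your second route is exactly the paper's proof. The paper argues by contrapositive that a nontrivial matrix convex combination of elements of $K_1$ representing $X$ is also a nontrivial one in $K_2$. You are in fact more careful than the paper, whose ``it is clear'' silently relies on the point you isolate. If $Y^\ell\in K_1$ and $Y^\ell\sim_u X\oplus Z^\ell$, then $Z^\ell$ is a compression of a unitary conjugate of $Y^\ell$, so it already lies in $K_1$. Hence ``trivial relative to $K_2$'' and ``trivial relative to $K_1$'' coincide for combinations drawn from $K_1$.

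Your preferred route is genuinely different. It reduces the lemma to the evident monotonicity of the Arveson condition under shrinking the set (a dilation lying in $K_1$ lies in $K_2$). This is also the style of argument the paper uses for Theorem~\ref{theorem:FaceLiftExtreme}. The cost is that you invoke both directions of the characterization ``free extreme $=$ irreducible $+$ Arveson extreme,'' and the direction free $\Rightarrow$ Arveson is not a formality. Applying the definition to the single compression $X=V^TYV$ of a dilation $Y=\begin{pmatrix} X&\beta\\ \beta^T&\gamma\end{pmatrix}$ only yields $Y\sim_u X\oplus Z$, which by itself does not force $\beta=0$. So the Arveson route is shorter only because it outsources that work. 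For the lemma at its stated generality, it also inherits the hypotheses of the cited theorem. That is harmless here, since the paper quotes the characterization for general $K$ and only applies the lemma to closed bounded sets. The definition-based argument, by contrast, is self-contained and uses nothing beyond matrix convexity of $K_1$.
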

\begin{proof}
    Since $K_1 \subseteq K_2$, any matrix convex combination of elements of $K_1$ is also a matrix convex combination of elements of $K_2$. Thus, if $X$ can be expressed as a nontrivial matrix convex combination of elements of $K_1$, it is clear that the same is true in $K_2$. That is, if $X$ is not free extreme in $K_1$, then $X$ is not free extreme in $K_2$.
\end{proof}

Our next theorem constructs an infinite family of free extreme points at the second level of a maximal matrix convex set over a convex polygon with at least four sides. 

\begin{theorem}\label{thm:PolygonExtreme}
    Let $\mathcal{P}$ be a bounded, convex $d$-gon with $d\geq4$. Then $\mathcal{W}^{\max}(\mathcal{P})$ has infinitely many free extreme points at level $2$ modulo unitary equivalence.
\end{theorem}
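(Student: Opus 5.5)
The plan is to enclose $\mathcal{P}$ in a bounded quadrilateral $\mathcal{Q}$ built from four of its sides, move $\wmax(\mathcal{Q})$ onto the free square $\mathcal{C}$ by a projective map, and pull back a family of level-$2$ free extreme points of $\mathcal{C}$ that land inside $\wmax(\mathcal{P})$.

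\emph{Choosing $\mathcal{Q}$.} After translating, assume $0$ is in the interior of $\mathcal{P}$. Fix an extreme point $\hat{x}$ of $\mathcal{P}$ with defining functionals $\ell_1,\ell_2$ vanishing at it.
\begin{itemize}
\item If $d=4$, take $\mathcal{Q}=\mathcal{P}$.
\item If $d\geq 5$, Lemma \ref{quadextension} gives adjacent $\ell_i,\ell_j$ such that $\mathcal{Q}=\{\ell_1,\ell_2,\ell_i,\ell_j\geq 0\}$ is a bounded quadrilateral.
\end{itemize}
In both cases $\mathcal{P}\subseteq \mathcal{Q}$. Moreover $\hat{x}$ is a vertex of $\mathcal{Q}$, and so is the point $\tilde{x}$ where $\ell_i=\ell_j=0$. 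Since $\ell_i,\ell_j$ are adjacent sides of $\mathcal{P}$, $\tilde{x}$ is also a vertex of $\mathcal{P}$. These two vertices are opposite in $\mathcal{Q}$, because $\ell_1,\ell_2$ meet at $\hat{x}$ and $\ell_i,\ell_j$ meet at $\tilde{x}$. So $\mathcal{P}$ and $\mathcal{Q}$ share the diagonal segment $[\hat{x},\tilde{x}]$, and by convexity this segment lies in $\mathcal{P}$.

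\emph{Transporting to the square.} Take an invertible projective map $\Phi$, as in \cite{E_QUADRILATERAL}, sending $\wmax(\mathcal{Q})=\cD_{A_{\mathcal{Q}}}$ onto $\mathcal{C}$. Such a map preserves free extreme points and restricts at level $1$ to a classical projective map taking $\mathcal{Q}$ onto the square, with $\hat{x},\tilde{x}$ going to opposite corners, say $(1,1)$ and $(-1,-1)$. By Proposition \ref{prop:EllipseFacts}, choosing $\theta$ near $0$ gives inscribed ellipses $E_\theta$ whose major axis lies on this diagonal and whose minor semi-axis $\sqrt2\sin(\theta/2)$ tends to $0$. By Proposition \ref{prop:SquareExtreme}, each $E_\theta$ is $\comat(Y^\theta)(1)$ for a level-$2$ free extreme point $Y^\theta$ of $\mathcal{C}$, with $\comat(Y^\theta)=\wmax(E_\theta)$. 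Distinct $\theta$ give distinct ellipses, so the $Y^\theta$ are pairwise not unitarily equivalent.

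Set $X^\theta=\Phi^{-1}(Y^\theta)$. Then $X^\theta$ is a free extreme point of $\wmax(\mathcal{Q})$ at level $2$. To conclude with Lemma \ref{lem:ExtremeOfSubset}, it remains to show $X^\theta\in\wmax(\mathcal{P})$. Since $\mathcal{P}$ is cut out by linear inequalities, this reduces to a level-$1$ statement:
\[
X^\theta\in\wmax(\mathcal{P}) \iff \comat(X^\theta)(1)\subseteq\mathcal{P}.
\]
The key step, and the main obstacle, is to identify $\comat(X^\theta)(1)$ with $\Phi^{-1}(E_\theta)$ (or at least place it inside that set). Scalar compressions of $X^\theta$ do not obviously correspond to scalar compressions of $Y^\theta$ under a projective map. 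I would first try to use the results of \cite{E_QUADRILATERAL} showing that projective maps carry matrix convex hulls to matrix convex hulls, together with Lemma \ref{lem:projconvcomb} at level $1$. Failing that, Proposition \ref{prop:Maximald2} says $\comat(X^\theta)$ is the maximal set over the ellipse $\comat(X^\theta)(1)$. This reduces the question to comparing two level-$1$ convex sets, one contained in $\mathcal{Q}$ and tangent to its sides, which should follow from how $\Phi$ acts on supporting lines.

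\emph{Finishing.} Granting the identification, $\Phi^{-1}(E_\theta)$ lies within a neighbourhood of $\Phi^{-1}$ of the diagonal. By continuity of $\Phi^{-1}$ on level $1$ (Lemma \ref{lem:projconvcomb}), this neighbourhood shrinks to the segment $[\hat{x},\tilde{x}]\subseteq\mathcal{P}$ as $\theta\to 0$. Near the endpoints, where the ellipses touch the two sides through each vertex, I would use that $\mathcal{P}$ and $\mathcal{Q}$ agree in a neighbourhood of $\hat{x}$ and of $\tilde{x}$, since both are bounded there by $\ell_1,\ell_2$ and by $\ell_i,\ell_j$ respectively. Compactness then gives $\Phi^{-1}(E_\theta)\subseteq\mathcal{P}$ for all sufficiently small $\theta$. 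This produces infinitely many pairwise non-unitarily-equivalent free extreme points $X^\theta$ at level $2$ of $\wmax(\mathcal{P})$.
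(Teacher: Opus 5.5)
Your architecture matches the paper's. You extend $\mathcal{P}$ to $\mathcal{Q}$ by Lemma \ref{quadextension}, move $\wmax(\mathcal{Q})$ onto $\mathcal{C}$, take thin inscribed ellipses along the diagonal through the chosen vertex, and finish with Lemma \ref{lem:ExtremeOfSubset}. Your level-$1$ work is sound and in places tidier than the paper's. Noting that $\hat{x}$ and $\tilde{x}$ share no side of $\mathcal{Q}$ replaces the paper's interior-point argument. Your continuity/compactness argument is a valid substitute for the paper's explicit bound via the hexagon $\conv(\{A',\dots,F'\})$; it uses that $\mathcal{P}$ and $\mathcal{Q}$ coincide near $\hat{x}$ and $\tilde{x}$, and that the open diagonal lies in the interior of $\mathcal{P}$.

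The gap is the step you flag and then simply grant: passing from the level-$1$ containment $\Phi^{-1}(E_\theta)\subseteq\mathcal{P}$ to the matrix-level membership $X^\theta\in\wmax(\mathcal{P})$. Because you apply Lemma \ref{lem:ExtremeOfSubset} on the $\mathcal{Q}$ side, you need to know $\comat(\Phi^{-1}(Y^\theta))(1)$, and nothing you cite determines it. Lemma \ref{lem:projconvcomb} concerns only level $1$. Proposition \ref{prop:Maximald2} says this set is \emph{some} ellipse in $\mathcal{Q}$, not that it is the thin one $\Phi^{-1}(E_\theta)$. This is exactly where you need to know how $\Phi$ acts at level $2$. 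So for $d\ge 5$ the argument does not reach its conclusion; your $d=4$ case is fine, since there $\mathcal{Q}=\mathcal{P}$.

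The paper supplies this ingredient by working on the square side. From $E_\theta\subseteq P(\mathcal{P})$ it gets $Y^\theta\in\wmax(E_\theta)\subseteq\wmax(P(\mathcal{P}))\subseteq\mathcal{C}$, so $Y^\theta$ is free extreme in $\wmax(P(\mathcal{P}))$ by Lemma \ref{lem:ExtremeOfSubset}. It then proves $\wmax(P(\mathcal{P}))=P(\wmax(\mathcal{P}))$ by two inclusions. These use that $P(\wmax(\mathcal{P}))$ and $P^{-1}(\wmax(P(\mathcal{P})))$ are matrix convex with first levels $P(\mathcal{P})$ and $\mathcal{P}$. Finally it pulls back with \cite[Theorem 3.7]{E_QUADRILATERAL} and preservation of irreducibility. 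With that identity your version closes at once: $X^\theta=\Phi^{-1}(Y^\theta)\in\Phi^{-1}(\wmax(\Phi(\mathcal{P})))=\wmax(\mathcal{P})$.

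Alternatively, your first idea can be carried out directly. Write $\Phi$ in congruence form $X\mapsto D(X)^{-1/2}N(X)D(X)^{-1/2}$, with $D,N$ free affine and $D\succ 0$ on the domain. If $X=\sum_\ell V_\ell^TZ^\ell V_\ell$, one checks that $\Phi(X)=\sum_\ell W_\ell^T\Phi(Z^\ell)W_\ell$, where $W_\ell=D(Z^\ell)^{1/2}V_\ell D(X)^{-1/2}$ and $\sum_\ell W_\ell^TW_\ell=I$. Applying this to both $\Phi$ and $\Phi^{-1}$ gives $\comat(X^\theta)=\Phi^{-1}(\comat(Y^\theta))$, hence $\comat(X^\theta)(1)=\Phi^{-1}(E_\theta)$. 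Either way, you must prove or cite some matrix-level compatibility of the projective map with matrix convexity; that is the missing idea.
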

\begin{proof}
    It is straightforward to check that free extreme points and maximality of matrix convex sets are preserved by invertible free affine linear transformations, so we can without loss of generality assume that $\mathcal{P}$ has $0$ in its interior. Thus, $\wmax(\mathcal{P})$ is a free spectrahedron.

    The $d=4$ case follows from \cite{E_QUADRILATERAL}. Still, the idea is useful for our upcoming construction, so we recall it here. If $d=4,$ then $\wmax(\mathcal{P})$ is a free quadrilateral. By \cite[Theorem 4.1]{E_QUADRILATERAL}, there exists an invertible projective map $P$ between $\mathcal{W}^{\max}(\mathcal{P})$ and the free square, such that $P(\mathcal{W}^{\max}(\mathcal{P}))=\mathcal{C}$. By \cite[Theorem 3.7]{E_QUADRILATERAL}, we have that invertible projective maps respect Arveson extreme points of bounded free spectrahedra. In particular, if $X\in\mathcal{C}(2)$ is Arveson extreme in $\mathcal{C}$, then $P^{-1}(X)\in\wmax(\mathcal{P})(2)$ is Arveson extreme in $\wmax(\mathcal{P})$.
    Since $P$ is invertible, and there exist infinitely many free extreme points in $\mathcal{C}(2)$ by Proposition \ref{prop:SquareExtreme}, there exist infinitely many Arveson extreme points in $\mathcal{W}^{\max}(\mathcal{P})(2)$, which are the preimages of the free (and hence Arveson) extreme points in $\mathcal{C}(2)$. It is routine to verify that an invertible projective map between two bounded free spectrahedra preserves irreducibility. Consequently, as the preimages of irreducible Arveson extreme points, these points are all irreducible, hence free extreme. Thus $\mathcal{W}^{\max}(\mathcal{P})$ has infinitely many free extreme points at level $2$.

    Now suppose $d\geq 5$. Let $B$ be an extreme point of $\mathcal{P}$, and let $\ell_1,...,\ell_d$ be the defining affine functionals of $\mathcal{P}$. Without loss of generality, assume $\ell_1$ and $\ell_2$ satisfy $\ell_1(B)=0=\ell_2(B)$. By Lemma \ref{quadextension}, there exist two adjacent defining equations $\ell_i$ and $\ell_j$ such that the quadrilateral $\mathcal{Q}=\{x:\ell_k(x)\geq0\text{ for }k=1,2,i,j\}$ is bounded. Like before, by \cite[Theorem 4.1]{E_QUADRILATERAL}, there exists an invertible projective map $P$ between $\mathcal{W}^{\max}(\mathcal{Q})$ and the free square, such that $P(\mathcal{W}^{\max}(\mathcal{Q}))=\mathcal{C}$. Furthermore, since $B$ is an extreme point of $\mathcal{Q}$, the projective map $P$ may be chosen so $P(B) = (1,1)$. We will show that there exist infinitely many ellipses in $P(\mathcal{P})$ corresponding to the matrix ranges of free extreme points of $\mathcal{C}$.
    
    To describe the properties of $P(\mathcal{Q})$ and $P(\mathcal{P})$, it will be useful to label all relevant extreme points of $\mathcal{P}$ and $\mathcal{Q}$, as in Figure \ref{thm:PolygonExtremepic}. Let $E$ denote the extreme point of $\mathcal{Q}$ that is not adjacent to $B$. Without loss of generality, we can assume that $\ell_1,\ell_i$ are adjacent in $\mathcal{Q}$ and also that $\ell_2,\ell_j$ are adjacent in $\mathcal{Q}$. Let $G$ and $H$ denote the other two extreme points of $\mathcal{Q}$ such that $\ell_1(G)=0=\ell_i(G)$ and $\ell_2(H)=0=\ell_j(H)$.
    Let $A$ denote the extreme point of $\mathcal{P}$ adjacent to $B$ such that $\ell_1(A)=0$, and likewise let $C$ denote the extreme point of $\mathcal{P}$ adjacent to $B$ such that $\ell_2(C)=0$. Similarly, let $D$ denote the extreme point adjacent to $E$ satisfying $\ell_j(D) = 0$, and $F$ the extreme point adjacent to $E$ satisfying $\ell_i(F) =0$. For the sake of notation, we let $A':=P(A)$, and likewise for the other points.

    \begin{figure}
    \centering
    \begin{tikzpicture}[
    >=Stealth,
    every node/.style={font=\small},
    dot/.style={circle,fill,inner sep=2pt}
]
        
        \begin{scope}[scale=0.60]
        
        \coordinate (F) at (0,1.3);
        \coordinate (X1) at (0.8,2.2);
        \coordinate (A) at (2.0,2.35);
        \coordinate (B) at (4.2,1.3);
        \coordinate (C) at (4.45,0.1);
        \coordinate (X2) at (3.5,-1.7);
        \coordinate (X3) at (2.4,-2.0);
        \coordinate (D) at (0.75,-1.55);
        \coordinate (E) at (0,-0.75);
        
        \draw[thick]
        (F)--(X1)--(A)--(B)--(C)--(X2)--(X3)--(D)--(E)--cycle;
        
        \foreach \P/\Pos/\Lab in {
        A/above/A,
        B/right/B,
        C/right/C,
        D/below/D,
        E/left/E,
        F/left/F}
        {
            \fill (\P) circle (3.3pt);
            \node[\Pos] at (\P){$\Lab$};
        }
        
        
        \coordinate (Q1) at ($(A)!1.7!(B)$);
        
        \coordinate (Q2) at ($(F)!-1.3!(E)$);
        
        \coordinate (G) at (0,3.3);
        
        \draw[dashed,thick] (A)--(G);
        \draw[dashed,thick] (F)--(G);
        
        \coordinate (H) at (5.8,-7);
        
        \draw[dashed,thick] (C)--(H);
        \draw[dashed,thick] (D)--(H);

        \foreach \P/\Pos/\Lab in {
        G/above/G,
        H/right/H}
        {
            \fill (\P) circle (3.3pt);
            \node[\Pos] at (\P){$\Lab$};
        }
        
        \end{scope}
        
        
        \draw[->,thick]
        (4,1.05)
        to[bend left=28]
        node[above] {$P$}
        (7,1.05);
        
        
        \begin{scope}[shift={(9.0,0)}]
        
        
        \def\t{0.75}
        
        \def\Atop{-0.10}
        \def\Fleft{0.10}
        \def\Dbot{-0.15}
        \def\Cright{0.18}
        
        
        \coordinate (NW) at (-1,1);
        \coordinate (NE) at (1,1);
        \coordinate (SE) at (1,-1);
        \coordinate (SW) at (-1,-1);
        
        
        \coordinate (Ap) at (\Atop,1);
        \coordinate (Bp) at (1,1);
        \coordinate (Cp) at (1,\Cright);
        \coordinate (Dp) at (\Dbot,-1);
        \coordinate (Ep) at (-1,-1);
        \coordinate (Fp) at (-1,\Fleft);
        \coordinate (Hp) at (1,-1);
        \coordinate (Gp) at (-1,1);
        
        
        \draw[thick] (Ep)--(Fp);
        \draw[thick] (Ep)--(Dp);
        \draw[thick] (Ap)--(Bp);
        \draw[thick] (Bp)--(Cp);
        
        \draw[dashed,thick] (Fp)--(NW)--(Ap);
        \draw[dashed,thick] (Cp)--(SE)--(Dp);
        
        
        \foreach \P in {Ap,Bp,Cp,Dp,Ep,Fp,Gp,Hp}
            \fill (\P) circle (2pt);
        
        \node[above] at (Ap) {$A'$};
        \node[above right] at (Bp) {$B'$};
        \node[right] at (Cp) {$C'$};
        \node[below] at (Dp) {$D'$};
        \node[below left] at (Ep) {$E'$};
        \node[left] at (Fp) {$F'$};

        \node[above left] at (Gp) {$G'$};
        \node[below right] at (Hp) {$H'$};
        
        
        \draw[red,dashed,thick] (Fp)--(Ap);
        \draw[red,dashed,thick] (Dp)--(Cp);
        
        
        \coordinate (TR) at (1,\t);
        
        \pgfmathsetmacro{\aa}{sqrt((1+\t)/2)}
        \pgfmathsetmacro{\bb}{sqrt((1-\t)/2)}
        
        \begin{scope}
        \clip (-1,-1) rectangle (1,1);
        
        \draw[red,very thick,samples=250,smooth,variable=\x,domain=0:360]
        plot ({\aa*cos(\x)-\bb*sin(\x)},
              {\aa*cos(\x)+\bb*sin(\x)});
        \end{scope}
        
        \end{scope}
        
        \end{tikzpicture}
    \caption{Mapping $\mathcal{Q}$ to $\mathcal{C}(1)$}
    \label{thm:PolygonExtremepic}
\end{figure}   

    We will now verify that $P$ preserves the relevant geometry of $\mathcal{P}$ and $\mathcal{Q}$. By Lemma \ref{lem:projconvcomb}, we have that projective maps respect convex hulls. As a consequence, they map extreme points to extreme points, as well as lines to lines. So, $B'$ and $E'$ must be extreme points of the square.
    Suppose $B'$ and $E'$ are adjacent. Then $\text{conv}(\{B',E'\})$ is a subset of the boundary of $\mathcal{C}(1)$. Let $X'\in \text{conv}(\{B',E'\})$ be a boundary point, with $B'\neq X'\neq E'$. It follows that the point $X:=P^{-1}(X')$ lies on $\text{conv}(\{B,E\})\subseteq \mathcal{Q}$. Since $B\neq X\neq E$, and $B$ and $E$ are not adjacent, we have that $X$ lies in the interior of $\mathcal{Q}$. Thus there exists some open ball $B_{\epsilon}(X)\subseteq \mathcal{Q}$. Since $P^{-1}$ is also an invertible projective map, and invertible projective maps are continuous on their domain, $B_{\epsilon}(X)$ must map to an open set $P(B_{\epsilon}(X))\subseteq \mathcal{C}(1)$. But this is impossible because $X'$ is on the boundary of $\mathcal{C}(1)$. Thus, $E'$ and $B'$ are not adjacent. It follows that $G'$ and $H'$ form the other nonadjacent extreme point pair of the square.
    
    Since $A\in \text{conv}(\{B,G\})$, we have that $A'\in \text{conv}(\{B',G'\})$. Likewise, $C', D',$ and $F'$ are each on their corresponding separate sides of the square. Without loss of generality, suppose that $A'$ has the shortest distance to the diagonal $\text{conv}(\{B',E'\})$.   Let $\delta=\text{dist}(A',\text{conv}(\{B',E'\}))$.

    Recall that we have chosen $P$ so that $B'=(1,1)$. By Proposition \ref{prop:EllipseFacts}, we can choose a point $(1,\cos\theta)$ on the side of $\mathcal{C}(1)$ sufficiently close to $B'$ that determines an ellipse $\mathcal{E}_\theta\subseteq \mathbb{R}^2$ inscribed in $\mathcal{C}(1)$, with the properties that $(1,\cos\theta)\in\mathcal{E}_\theta$, the major axis is contained in the diagonal $\text{conv}(\{B',E'\})$, and the length of the minor semi-axis is less than $\delta$. It is now straightforward to show that 
    \[
    \mathcal{E}_\theta\subseteq \text{conv}(\{A',B',C',D',E',F'\})\subseteq P(\mathcal{P}).
    \]
    Hence, $\mathcal{E}_\theta$ is an ellipse in $P(\mathcal{P})$ that is also inscribed in the square. By Proposition \ref{prop:SquareExtreme}, there exists a free extreme point $X_\theta\in\mathcal{C}(2)$ such that $\comat(X_\theta)(1)=\mathcal{E}_\theta$. As a consequence, we have that
    \[
    X_\theta\in \wmax(\mathcal{E}_\theta)\subseteq \wmax(P(\mathcal{P})).
    \]
    Furthermore, since $\mathcal{C}$ is the maximal matrix convex set over the unit square, we have $\mathcal{W}^{\max}(P(\mathcal{P}))\subseteq\mathcal{C}$. Since $X_\theta$ is free extreme in $\mathcal{C}$, it follows from Lemma \ref{lem:ExtremeOfSubset} that $X_\theta$ is free extreme in $\wmax(P(\mathcal{P}))$.
    
    We now argue that $\mathcal{W}^{\max}(P(\mathcal{P}))=P(\mathcal{W}^{\max}(\mathcal{P}))$. Since level one of $P(\wmax(\mathcal{P}))$ is $P(\mathcal{P})$, we immediately have that 
    \[\wmax(P(\mathcal{P}))\supseteq P(\wmax(\mathcal{P})).\]
    For the other direction, observe that level one of $P^{-1}(\wmax(P(\mathcal{P})))$ is $\mathcal{P}$. Then we have 
    \[P^{-1}(\wmax(P(\mathcal{P})))\subseteq\wmax(\mathcal{P}),\]
    and consequently
    \[\wmax(P(\mathcal{P}))\subseteq P(\wmax(\mathcal{P})).\]
    Thus, $\mathcal{W}^{\max}(P(\mathcal{P}))=P(\mathcal{W}^{\max}(\mathcal{P}))$, hence $X_\theta$ is free extreme in $P(\mathcal{W}^{\max}(\mathcal{P}))$.
    
    As in the $d=4$ case, \cite[Theorem 3.7]{E_QUADRILATERAL} lets us conclude that $P^{-1}(X_\theta)$ is Arveson extreme in $\mathcal{W}^{\max}(\mathcal{P})$. Furthermore, since $X_\theta$ is irreducible, $P^{-1}(X_\theta)$ is also irreducible, and hence a free extreme point of $\mathcal{W}^{\max}(\mathcal{P})$ at level $2$. It is clear that the above construction holds for any $\phi \in (0,\theta]$, hence $\{P^{-1}(X_\phi)\}$ is an infinite family of nonunitarily equivalent free extreme points of $\wmax(\mathcal{P})$, which completes the proof.
\end{proof}

The previous construction also provides us with a method to show that the level one extreme points of $\wmax(\mathcal{P})$ are not crucial free extreme points.

\begin{corollary}
\label{cor:NoCrucialG2}
    Let $\mathcal{P}$ be a bounded, convex $d$-gon with $d\geq 4$. Then the extreme points of $\mathcal{P}$ are not crucial free extreme points of $\mathcal{W}^{\max}(\mathcal{P})$. 
\end{corollary}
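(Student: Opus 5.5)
Fix an extreme point $B$ of $\mathcal{P}$. The plan is to realize $B$ as a limit of free extreme points $P^{-1}(X_\theta)$ built in the proof of Theorem \ref{thm:PolygonExtreme}, which are level-two and irreducible, hence not unitarily equivalent to $B$. Since a point lies in its own matrix convex hull, this gives
\[
B \in \overline{\comat(\free \wmax(\mathcal{P})\backslash\{Y \sim_u B\})},
\]
so $B$ is not crucial. We know $B$ is free extreme by \cite{E_EXTREMEPOINTS}, so only the failure of cruciality needs proof. As in the theorem, we first reduce to the case $0 \in \mathrm{int}(\mathcal{P})$ via an invertible affine map. Such maps preserve free extreme points, unitary equivalence classes and matrix convex combinations, and they are continuous, so cruciality is preserved.

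For $d=4$ we take $\mathcal{Q}=\mathcal{P}$. For $d\ge 5$ we apply Lemma \ref{quadextension} with $\hat x = B$ to obtain $\mathcal{Q}$. We then choose the projective map $P$ with $P(\mathcal{Q})=\mathcal{C}(1)$ and $B'=P(B)=(1,1)$, exactly as in the proof of Theorem \ref{thm:PolygonExtreme}. For each sufficiently small $\theta>0$, that proof produces an inscribed ellipse $\mathcal{E}_\theta \subseteq P(\mathcal{P})$ with major axis on the diagonal through $B'$. By Proposition \ref{prop:EllipseFacts} its semi-axes are $\sqrt2\cos(\theta/2)\to\sqrt2$ and $\sqrt2\sin(\theta/2)\to 0$. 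It also yields a free extreme point $X_\theta\in \mathcal{C}(2)$ with $\comat(X_\theta)=\wmax(\mathcal{E}_\theta)$ (Proposition \ref{prop:SquareExtreme}), and $Y_\theta:=P^{-1}(X_\theta)$ is free extreme in $\wmax(\mathcal{P})$.

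The key step is to produce scalar points $y_\theta \in \comat(Y_\theta)(1)$ with $y_\theta \to B$. The endpoint $b_\theta$ of the major axis nearest $B'$ lies in $\mathcal{E}_\theta=\comat(X_\theta)(1)$, so $b_\theta = v^T X_\theta v$ for some unit vector $v$. Moreover $b_\theta = \frac{\cos(\theta/2)}{1}(1,1)\to B'$ in the plane. The main obstacle is transferring this compression through $P^{-1}$, because projective maps do not commute with compressions. Here I would use the properties from \cite{E_QUADRILATERAL}: a projective map sends $\comat(X)$ onto $\comat(P(X))$ at level one. This is the same structure that makes $P$ respect matrix convexity and Arveson extreme points in \cite[Theorem 3.7]{E_QUADRILATERAL}. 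It gives $\comat(Y_\theta)(1)=P^{-1}(\mathcal{E}_\theta)$.

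If that identity is not directly available, I would argue instead at the level of $\wmax$. Since $\comat(X_\theta)=\wmax(\mathcal{E}_\theta)$, the equality $\wmax(P(\mathcal{P}))=P(\wmax(\mathcal{P}))$ argument from the theorem, applied to $\mathcal{E}_\theta$ in place of $\mathcal{P}$, shows that $P^{-1}(\wmax(\mathcal{E}_\theta))$ is the matrix convex set $\wmax(P^{-1}(\mathcal{E}_\theta))$. That set contains $Y_\theta$. Proposition \ref{prop:Maximald2} then identifies $\comat(Y_\theta)$ with the maximal set over its level-one ellipse, which must be $P^{-1}(\mathcal{E}_\theta)$. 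Either way, $y_\theta:=P^{-1}(b_\theta)\in \comat(Y_\theta)(1)$. Continuity of $P^{-1}$ on the first level (Lemma \ref{lem:projconvcomb}) then gives $y_\theta\to P^{-1}(B')=B$.

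Each $y_\theta$ is a matrix convex combination of the single free extreme point $Y_\theta$, which is not unitarily equivalent to $B$ since it lies at level two. Therefore $B$ belongs to the closure above, and $B$ is not a crucial free extreme point. Since $B$ was an arbitrary extreme point of $\mathcal{P}$, this proves the corollary.
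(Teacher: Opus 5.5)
Your proposal is correct and follows essentially the same route as the paper: you approximate the vertex by points of the level-one matrix convex hulls of the pulled-back level-two free extreme points $P^{-1}(X_\theta)$ from Theorem \ref{thm:PolygonExtreme}, letting the inscribed ellipses thin onto the diagonal through $(1,1)$. You are more explicit than the paper about the one delicate step, transporting points of $\comat(X_\theta)(1)$ through $P^{-1}$ via the fact that projective maps carry matrix convex combinations to matrix convex combinations, which the paper only asserts with ``It follows that''. Your first justification of that step suffices. Your fallback route, as written, only gives $\comat(Y_\theta)(1)\subseteq P^{-1}(\mathcal{E}_\theta)$; the containment you actually need requires the symmetric argument with $P$ in place of $P^{-1}$.
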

\begin{proof}

    WLOG assume that $0$ is in the interior of $\mathcal{P}$. Let $Y\in \mathcal{P}$ be an extreme point of $\mathcal{P}$.
    Following the proof of Theorem \ref{thm:PolygonExtreme}, we construct the quadrilateral $\mathcal{Q}$ and obtain an invertible projective map $P$ from $\wmax(\mathcal{Q})$ to $\mathcal{C}$. As in the proof of Theorem \ref{thm:PolygonExtreme}, we can assume that $P(Y)=(1,1)$.
    Furthermore, by letting $\phi$ approach $0$ in the proof above, we can choose an infinite family of ellipses $\{\mathcal{E}_j\}_{j=1}^{\infty}$ such that  $\mathcal{E}_j\subseteq P(\mathcal{P})$ for all $j$, and such that $(1,1)\in\overline{\conv(\cup_j\mathcal{E}_j)}$. For each $j$, let $X^j\in SM_2(\RR)^2$ be an irreducible tuple in $P(\wmax(\mathcal{P}))$ such that $\comat(X^j)(1)=\mathcal{E}_j$. As argued in the proof of Theorem \ref{thm:PolygonExtreme}, $X^j$ is free extreme in $P(\wmax(\mathcal{P}))$. Additionally, since $\mathcal{E}_j = \comat(X^j)(1)$, we have that $(1,1)\in \overline{\comat(\{X^j\})}$. It follows that $Y\in \overline{\comat(\{P^{-1}(X^j)\})}$. Furthermore, $Y \in \R^2$ cannot be unitarily equivalent to $P^{-1}(X^j) \in SM_2(\R)^2$ for any $j$. Finally, by \cite[Theorem 3.7]{E_QUADRILATERAL}, we have that $P^{-1}(X^j)$ is free extreme in $\wmax(\mathcal{P})$ for all $j$. We conclude that $Y$ is not crucial free extreme in $\wmax(\mathcal{P})$.
    \end{proof}

\section{Extreme points of graded lifts of faces}

\label{sec:FaceLift}

Fix positive integers $1 \leq g<h$. In this section, we show that free extreme points of a free spectrahedron $\cD_A \subset SM(\R)^h$ can be obtained from free extreme points of the graded lift of a $g$-dimensional face of $\cD_A(1)$. Recall that a subset $F$ of a convex set $C\subset \R^h$ is said to be a face if, whenever $x,y \in C$ and there exists  a $t \in (0,1)$ such that $tx + (1-t)y \in F$, one has $x,y \in F$, see, e.g., \cite{RG95}.

To streamline our proofs, it will be convenient to have a standard position for the faces we consider. To this end, let $C \subset \R^h$ be a closed bounded convex set with $0$ in its interior, and let $F$ be a $g$-dimensional face of $C$. Letting $\aff(F)$ denote the affine hull of $F$, we say $F$ is in standard position relative to $C$ if the following assumptions hold:
\begin{enumerate}
    \item $\aff(F) = \{x \in \R^h: x_{g+1} = 1 \ \mathrm{and} \ x_j = 0 \mathrm{ \ for \ all \ } j>g+1\}$.
    \item $(0,\dots,0,1,0,\dots,0)$ is in the relative interior of $F$. 
    \item $C \subseteq \{x: x_{g+1} \leq 1\}.$
\end{enumerate}

\begin{lemma}
\label{lem:StandardPos}
 Let $C \subset \R^h$ be a closed bounded convex set containing $0$ in its interior and let $F \subset C$ be a $g$-dimensional face of $C$. Then there exists an invertible linear transformation $T:\R^h \to \R^h$ such that $T(F)$ is in standard position relative to $T(C)$.  
\end{lemma}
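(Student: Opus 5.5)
Because $T$ has to be linear rather than affine, the origin stays fixed, and the whole construction rests on two facts: $0 \in \operatorname{int} C$, and $F$ is a proper face. The plan is to produce a basis $v_1,\dots,v_h$ of $\R^h$ and a supporting functional, and then take $T$ to be the inverse of the matrix whose columns are $v_1,\dots,v_h$.

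First we find a supporting hyperplane for $F$. Since $F$ is a proper face of $C$, it lies in the relative boundary of $C$. Pick $p$ in the relative interior of $F$; then $p \in \partial C$. Since $0 \in \operatorname{int} C$, we have $p \neq 0$. The supporting hyperplane theorem gives a linear functional $\phi$ with $\phi(x) \le \phi(p)$ for all $x \in C$. Because $0$ is interior, $\phi(p) > 0$, so we may rescale so that $\phi(p)=1$. Now $C \subseteq \{\phi \le 1\}$, and $F \subseteq \{\phi = 1\}$. The second inclusion holds because $\phi$ attains its maximum over $C$ at the relative interior point $p$ of the convex set $F$, so $\phi$ is constant on $F$. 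A short argument: if $q \in F$ had $\phi(q)<1$, extend the segment from $q$ through $p$ slightly beyond $p$ inside $F$ to reach a point with $\phi>1$, a contradiction. Consequently $\aff(F) \subseteq \{\phi=1\}$.

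Next we build the basis. Write $\aff(F) = p + W$, where $W$ is a linear subspace of dimension $g$. Since $W \subseteq \ker \phi$ and $\phi(p)=1$, the vector $p$ does not lie in $W$. Choose a basis $v_1,\dots,v_g$ of $W$, set $v_{g+1} = p$, and extend to a basis of $\R^h$ by choosing $v_{g+2},\dots,v_h$ in $\ker\phi$. This is possible because $\ker\phi$ has dimension $h-1$ and contains $W$; the $v_j$ for $j\ne g+1$ together form a basis of $\ker\phi$, and $p\notin\ker\phi$. Let $T$ be the linear map sending $v_k \mapsto e_k$.

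Finally we verify the three conditions. In the new coordinates, $\phi\circ T^{-1}(y) = y_{g+1}$, since $\phi(v_k)=0$ for $k\neq g+1$ and $\phi(v_{g+1})=1$. Condition (3) therefore follows from $C \subseteq \{\phi\le 1\}$. Next, $T(\aff F) = e_{g+1} + \mathrm{span}(e_1,\dots,e_g)$, which is exactly the set in condition (1). Condition (2) holds because $T(p) = e_{g+1}$ and $T$ maps relative interiors to relative interiors. Taken together, $T(F)$ is a $g$-dimensional face of $T(C)$ in standard position.

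The only step with real content is the first: showing that $F$ lies in the relative boundary and that the supporting functional is constant on $F$. I would handle it with the relative interior argument above, noting that $F \neq C$ because $\dim F = g < h = \dim C$, where $\dim C = h$ since $C$ has interior. Everything else is linear algebra.
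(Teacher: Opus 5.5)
Your proof is correct, and it matches what the paper intends: the paper gives no details beyond saying the lemma follows from standard convex geometric techniques, and your argument (a supporting functional at a relative interior point $p$ of the proper face $F$, normalized so $\phi(p)=1$ and shown constant on $F$, followed by a basis adapted to $\aff(F)\subseteq\{\phi=1\}$ and $\ker\phi$) is exactly such a standard argument. All three standard-position conditions are verified correctly, and the needed $g<h$ is the standing assumption of the section.
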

\begin{proof}
 The proof of Lemma \ref{lem:StandardPos} follows from standard convex geometric techniques. 
\end{proof}

We now aim to define the graded lifts  of faces at level $1$ of a free spectrahedron. To this end, let $C$ be a closed convex set and $F \subset C$ be a $g$-dimensional face of $C$. We define the matrix affine hull of $F$, denoted $\affmat(F),$ by
\[
\affmat(F) = \comat (\aff(F)).
\]

As $\aff(F)$ has codimension $h-g$, there exists a collection of $h-g$ affine linear functionals whose common zero set is equal to $\aff(F)$. The following  proposition shows that $\affmat(F)$ is equal to the matricial zero set of any collection of affine linear functionals that define $\aff(F)$. 

\begin{proposition}
    Let $C \subset \R^h$ be a closed bounded convex set containing $0$ in its interior and let $F \subset C$ be a $g$-dimensional face of $C$. Let $\ell_1,\dots,\ell_{h-g}$ be a collection of affine linear functionals with the property that
    \[
    \aff(F) = \{x \in \R^h : \ell_j (x) = 0 \mathrm{\ for \ all \ } j=1,\dots, h-g\}. 
    \]
    Then 
    \[
    \affmat(F) = \{X \in SM(\R)^h : \ell_j (X) = 0 \mathrm{\ for \ all \ } j=1,\dots, h-g\}. 
    \]
\end{proposition}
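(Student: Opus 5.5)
We prove the two inclusions separately. Throughout, write $Z := \{X \in SM(\R)^h : \ell_j(X) = 0 \text{ for all } j\}$. For an affine functional $\ell(x) = c - \sum_k a_k x_k$, its evaluation on a tuple $X \in SM_n(\R)^h$ is $\ell(X) = cI_n - \sum_k a_k X_k$.

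For $\affmat(F) \subseteq Z$, take a matrix convex combination
\[
X = \sum_{i} V_i^T x^i V_i, \qquad x^i \in \aff(F), \qquad \sum_i V_i^T V_i = I_n.
\]
Here each $x^i$ is a scalar point, which suffices since $\affmat(F) = \comat(\aff(F))$ and $\aff(F) \subset \R^h = SM_1(\R)^h$. Affine functionals commute with matrix convex combinations, because the constant term is handled by $\sum_i V_i^T V_i = I$:
\[
\ell_j(X) = \sum_i V_i^T \ell_j(x^i) V_i = 0.
\]
Hence $X \in Z$.

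For $Z \subseteq \affmat(F)$, take $X \in Z(n)$. We first reduce to coordinates. Choose an invertible affine change of variables $x = x^0 + My$, where $x^0 \in \aff(F)$ and the columns of $M$ span the direction space of $\aff(F)$. Extend these columns to a basis, so the last $h-g$ new coordinates $y_{g+1},\dots,y_h$ span the common linear parts of the $\ell_j$. Then $\aff(F) = \{y_{g+1} = \dots = y_h = 0\}$. This change of variables is induced by an invertible free affine map on $SM(\R)^h$. Such a map commutes with matrix convex combinations, so it carries $\comat$ to $\comat$. It also carries $Z$ to the matricial zero set of the transformed functionals. Since the $\ell_j$ cut out a set of codimension exactly $h-g$, their linear parts are linearly independent. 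Therefore $\{\ell_j(X) = 0\ \forall j\}$ is equivalent to $Y_{g+1} = \dots = Y_h = 0$ for the transformed tuple $Y$ (using $\ell_j(x^0) = 0$). So it suffices to treat $\aff(F) = \R^g \times \{0\}$.

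In this setting, take $Y = (Y_1,\dots,Y_g,0,\dots,0)$ with the $Y_k \in SM_n(\R)$ arbitrary. We must show $Y \in \comat(\R^g \times \{0\})$. This is the main point. Write
\[
Y = \tfrac12 (Y - R) + \tfrac12 (Y + R)
\]
for suitable $R$, or more directly use a compression of a diagonal tuple. The first approach we try is a dilation: find commuting symmetric dilations of $Y_1,\dots,Y_g$ on a larger space, which can be simultaneously diagonalized. Concretely, take a large $N$ and an isometry $V \in M_{N\times n}(\R)$ together with diagonal (hence commuting) matrices $D_1,\dots,D_g$ satisfying $V^T D_k V = Y_k$ for every $k$. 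Such $V$ and $D_k$ exist because every $g$-tuple of real symmetric matrices lies in $\wmin(\R^g)$. To see this, note that $\wmin([-r,r]^g) = \comat([-r,r]^g)$ contains $Y$ for $r$ large. This follows from the standard inclusion $\wmax([-1,1]^g) \subseteq \vartheta\,\wmin([-1,1]^g)$ with a finite scaling constant $\vartheta$ depending on $g$ and $n$ (cf.\ \cite{PSS18}), applied to $Y/(\vartheta r')$ where $\|Y_k\| \le r'$. Once $Y$ is a compression of a tuple of commuting diagonal matrices, it is a matrix convex combination of the scalar points formed by their diagonal entries. Those points lie in $\R^g \times \{0\}$, with the zero coordinates preserved trivially since $V^T 0 V = 0$. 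Hence $Y \in \affmat(F)$.

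The expected obstacle is exactly this last step: the statement that every real symmetric $g$-tuple is a compression of a commuting (diagonal) symmetric tuple. I would cite the scaling inclusion of $\wmax$ into a multiple of $\wmin$ for the cube from \cite{PSS18}. If a self-contained argument is preferred, I would instead use Naimark-type dilation coordinatewise, dilating each $Y_k$ separately to a diagonal matrix on its own summand. The cost there is that the compressions must be combined with weights $1/g$ and rescaled by $g$, which is allowed because $\R^g \times \{0\}$ is closed under scaling by $g$.
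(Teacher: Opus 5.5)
Your argument is correct and is essentially the paper's: the inclusion $\affmat(F)\subseteq Z$ holds because affine functionals commute with matrix convex combinations. The reverse inclusion comes from changing coordinates so that $\aff(F)$ becomes a coordinate subspace, then using that every tuple in $SM(\R)^g$ is a matrix convex combination of points of $\R^g$, a fact the paper simply cites.

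Your fallback argument for that last fact is a complete, elementary proof over $\R$: average the tuples carrying $gY_k$ in slot $k$ and zeros elsewhere, each of which is orthogonally diagonalizable into a direct sum of scalar points. This makes the detour through the $\wmax\subseteq\vartheta\,\wmin$ scaling inclusion unnecessary; that inclusion is stated over $\mathbb{C}$ in the reference you name.
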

\begin{proof}
        Using Lemma \ref{lem:StandardPos}, we can WLOG assume that $F$ is in standard position with respect to $C$. 
    For notational convenience, define
    \[
    \mathcal{F} :=\{X \in SM(\R)^h : \ell_j (X) = 0 \mathrm{\ for \ all \ } j=1,\dots, h-g\}.
    \]
    It is straightforward to verify that matrix convex combinations preserve affine relations, from which it follows that $\affmat(F) \subset \mathcal{F}$. 

    For the reverse direction, since $F$ is in standard position with respect to $C$, up to a standard change of basis on the defining affine functionals, we can WLOG assume $\ell_1(x) = 1-x_{g+1}$ and $\ell_j(x) = x_{g+j}$ for $j=2,\dots,h-g$. Thus we have
    \[
    \aff(F) =\{(y,1,0,\dots,0) \in \R^h : y \in \R^g\}, \]
    and, moreover,
    \[
    \mathcal{F} = \{(Y,I,0,\dots,0) \in SM(\R)^h: Y\in SM(\R)^g\}. 
    \]
    It is well known that any tuple $Y \in SM(\R)^g$ can be expressed as a matrix convex combination of a collection of tuples $y^1,\dots,y^m \in \R^g$, see, e.g., \cite{DDSS17,HKMS19}. It follows that $\mathcal{F} \subset \affmat(F)$, which completes the proof. 
\end{proof}

We now define the graded lift of a face at the first level of a matrix convex set.

\begin{definition}
\label{def:grlift}
Let $K \subset SM(\R)^h$ be a closed matrix convex set, and let $F \subset K(1)$ be a face of $K(1)$. Define the graded lift of the face $F$ with respect to $K$ by
\[
\mathrm{gr}_K(F) = \affmat(F) \cap K. 
\]
\end{definition}

It is not hard to see that one always has 
\[
F = \mathrm{gr}_K(F) (1).
\]
Furthermore, a straightforward check shows that if $T$ is an invertible linear transformation of $\R^h$, then 
\[
T(\grlift{K}{F}) = \grlift{T(K)}{T(F)}. 
\]

\begin{theorem}
\label{theorem:FaceLiftExtreme}
Suppose $\cD_A\subseteq SM(\R)^h$ is a bounded real free
spectrahedron and suppose that $\cD_A(1)$ has a $g$-dimensional face
$F$ that is in standard position with respect to $\cD_A(1)$. Define
\[
K
=
\{
X\in SM(\R)^g :
(X,I,0,\dots, 0)\in \grlift{\cD_A}{F}\}.
\]
Then $K$ is a bounded free spectrahedron. Moreover,
\[
X\in \partial^{\mathrm{free}}K \quad\Longleftrightarrow\quad
(X,I,0,\dots,0)\in \partial^{\mathrm{free}}\grlift{\cD_A}{F}
\quad\Longleftrightarrow\quad 
(X,I,0,\dots,0)\in \partial^{\mathrm{free}}\mathcal D_A.
\]
\end{theorem}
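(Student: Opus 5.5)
The plan is to reduce everything to the Arveson-plus-irreducibility characterization of free extreme points \cite[Theorem 1.1]{E_EXTREMEPOINTS}, \cite[Theorem 1.2]{EH19}, after first rewriting $K$ as a monic free spectrahedron.

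\textbf{$K$ is a bounded free spectrahedron.} For $X\in SM(\R)^g$ we have
\[
L_A(X,I,0,\dots,0)=B\otimes I-\sum_{i=1}^g A_i\otimes X_i,\qquad B:=I-A_{g+1}.
\]
Since $e_{g+1}\in F\subseteq \cD_A(1)$, we get $B=L_A(e_{g+1})\succeq 0$, but $B$ may be singular. Because $e_{g+1}$ lies in the relative interior of $F$, the points $e_{g+1}\pm t e_i$ belong to $F$ for $i\le g$ and small $t>0$. This gives $B\mp tA_i\succeq 0$, and hence $\ker B\subseteq \ker A_i$. Let $W$ be an isometry onto $\mathrm{ran}\,B$. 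Compressing to $\mathrm{ran}\,B$ and conjugating by $(W^TBW)^{-1/2}$ turns the inequality into $L_{\tilde A}(X)\succeq 0$ with $\tilde A_i=(W^TBW)^{-1/2}W^TA_iW(W^TBW)^{-1/2}$. So $K=\cD_{\tilde A}$. Boundedness follows because $K(1)$ is affinely identified with $F$, which is bounded, and $K$ is matrix convex.

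\textbf{First equivalence.} The map $\iota:X\mapsto (X,I,0,\dots,0)$ is a bijection from $K$ onto $\grlift{\cD_A}{F}$. This uses the standard-position description of $\affmat(F)$ from the preceding proposition. The map $\iota$ commutes with unitary conjugation and direct sums. Also, $X$ is irreducible if and only if $\iota(X)$ is, since a common invariant subspace of $(X,I,0,\dots,0)$ is the same as one of $X$. For the Arveson condition, any dilation of $\iota(X)$ inside $\grlift{\cD_A}{F}$ has its $(g+1)$st coordinate equal to $I$ and its later coordinates equal to $0$. So the off-diagonal block lives only in the first $g$ coordinates, and such dilations correspond exactly to dilations of $X$ in $K$. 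Hence Arveson extremity transfers in both directions.

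\textbf{Second equivalence.} If $\iota(X)\in\free\cD_A$, then Lemma~\ref{lem:ExtremeOfSubset} gives $\iota(X)\in\free\grlift{\cD_A}{F}$. For the converse, irreducibility is automatic, so we must show Arveson extremity in $\cD_A$. Suppose
\[
Z=\begin{pmatrix}\iota(X)&\beta\\ \beta^T&\gamma\end{pmatrix}\in\cD_A .
\]
Since $\cD_A(1)\subseteq\{x_{g+1}\le 1\}$, we get $I-Z_{g+1}\succeq 0$. The top-left block of $I-Z_{g+1}$ is zero, which forces $\beta_{g+1}=0$.

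\textbf{Main obstacle: $\beta_j=0$ for $j>g+1$.} Once this holds, the compression of $Z$ to the coordinates $1,\dots,g$ paired with the original space shows that $\begin{pmatrix}X&\beta'\\ \beta'^T&\ast\end{pmatrix}$ restricts appropriately. More directly, one then checks that the dilation can be replaced by one lying in $\affmat(F)$, so Arveson extremity in $\grlift{\cD_A}{F}$ gives $\beta=0$.

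If the face $\{x_{g+1}=1\}\cap\cD_A(1)$ equals $F$, i.e.\ $F$ is exposed, I would argue as follows.
\begin{enumerate}
\item Use the eigenspace $E$ of $\gamma_{g+1}$ for eigenvalue $1$. Vectors outside $\ker(I-Z_{g+1})=\R^n\oplus E$ are handled by the previous step.
\item Compress $Z$ to $\R^n\oplus E$. This yields a matrix convex combination lying in $\{x_{g+1}=1\}$, hence in $\affmat(F)$, so its coordinates $j>g+1$ vanish.
\item Use positivity of $L_A(Z)$ together with the kernel relation from the previous step to kill the mixed blocks $\beta_j$ pairing $\R^n$ with $E^\perp$.
\end{enumerate}
For a non-exposed face, take a chain $F=F_k\subset F_{k-1}\subset\dots\subset F_0=\cD_A(1)$ in which each face is exposed in the next. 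Then iterate the exposed argument, applying standard position (Lemma~\ref{lem:StandardPos}) at each stage to the graded lift of $F_{i-1}$. This iteration is the delicate part, and the step I would check most carefully is that the kernel/compression argument really kills the mixed blocks at every stage.
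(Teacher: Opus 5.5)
You have the right overall architecture: the compression representation $K=\cD_{\tilde A}$, the direct transfer of irreducibility and Arveson extremity between $K$ and $\grlift{\cD_A}{F}$, Lemma~\ref{lem:ExtremeOfSubset}, and $\beta_{g+1}=0$ are all what is needed. But the step you flag as the main obstacle, $\beta_j=0$ for $j>g+1$, is not actually proved.

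In your exposed-face argument, steps 1--2 only kill the part of $\beta_j$ that pairs $\R^n$ with your eigenspace $E$. For the part pairing $\R^n$ with $E^\perp$, exposedness gives no information. Any scalar compression of $Z$ along a vector with nonzero $E^\perp$-component has $x_{g+1}<1$, so it lies off the exposing hyperplane. ``Positivity of $L_A(Z)$ plus the kernel relation'' is not yet an argument for this part. The chain-of-exposed-faces iteration is likewise only a sketch. It would also require re-embedding the dilation into each intermediate graded lift before the exposed argument could be reapplied.

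The missing idea is to localize at the diagonal entries and turn the kernel condition into an actual segment at level $1$. This makes both exposedness and $E$ unnecessary. Write $Y=\iota(X)$. For each $i\le n$ and each unit vector $w$ in the dilation space, compress $Z$ to $\mathrm{span}\{e_i\oplus 0,\,0\oplus w\}$:
\[
\begin{pmatrix} Y(i,i) & b\\ b & w^T\gamma w\end{pmatrix}\in\cD_A(2),\qquad Y(i,i)=(X(i,i),1,0,\dots,0),\quad b=(\beta w)(i)\in\R^h .
\]
Positivity of $L_A$ at this tuple forces $\ker L_A(Y(i,i))\subseteq\ker\Lambda_A(b)$. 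Since $\Lambda_A(b)$ is symmetric and $L_A(Y(i,i))\succeq0$, this gives $Y(i,i)\pm\epsilon b\in\cD_A(1)$ for some $\epsilon>0$. Now $Y(i,i)\in\cD_A(1)\cap\aff(F)=F$, and $F$ is a face, exposed or not. So both endpoints lie in $F\subseteq\aff(F)$, which gives $b_j=0$ for $j\ge g+1$. Varying $i$ and $w$ yields $\beta_j=0$ for all $j\ge g+1$ at once. This is essentially the paper's argument, where $\beta$ is a column supplied by the kernel criterion \cite[Lemma 2.1]{EH19}.

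After that, your replacement step does go through. For $i\le g$ you have $\mathrm{ran}\,A_i\subseteq\mathrm{ran}(I-A_{g+1})$, and $L_A(Z)\succeq0$ gives $\ker L_A(Y)\subseteq\ker\Lambda_A(\beta^T)$. So you can put $(0,I,0,\dots,0)$ in the corner and shrink $\beta$ to obtain a dilation inside $\grlift{\cD_A}{F}$. The paper does the equivalent bookkeeping by conjugating with $(I-A_{g+1})^{\dagger/2}$ and applying the kernel criterion to the reduced pencil defining $K$.
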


\begin{proof}
To simplify the notation in the proof, we use $(X,I,0)$ to denote $(X,I,0,\dots,0)$. Observe that $(X,I,0)$ is irreducible if and only if $X$ is irreducible. Thus, using \cite[Theorem 1.2]{EH19} it is sufficient to show that $X$ is Arveson extreme in $K$ if and only if $(X,I,0)$ is Arveson extreme in $\cD_A$. Additionally, by assumption $F$ is nonempty, so $K$ is also nonempty. Furthermore, $K$ is bounded since $\cD_A$ is bounded.

 We first show that
$K$ is a free spectrahedron. To this end, define $\check{A} = (A_1,\dots,A_g)$ and observe that by definition
$X\in K$ if and only if
\begin{align*}
L_A(X,I,0)
&=
I_d\otimes I_n
-A_1\otimes X_1
-\cdots
-A_g\otimes X_g
-A_{g+1}\otimes I_n
-\sum_{j=g+2}^h A_j\otimes 0\\
&=
(I_d-A_{g+1})\otimes I_n
-\Lambda_{\check A}(X)
\succeq 0.
\end{align*}

Now, our assumptions imply that $0$ is in the interior of $K(1)$, from which it follows that
\[
\ker(I-A_{g+1}) \subseteq
\ker\Lambda_{\check A}(x)
\]
for all $x \in \R^g$, hence
\[
\ker(I-A_{g+1}) \subseteq \ker A_i
\qquad \mathrm{for \ all \ }
i\leq g.
\]
Additionally, we obtain $I-A_{g+1} \succeq 0$. 

Letting $\dagger$ denote the Moore-Penrose pseudoinverse, it follows that
\[
(I-A_{g+1})^{\dagger/2}
\bigl(
I-A_{g+1},A_1,\ldots,A_g
\bigr)
(I-A_{g+1})^{\dagger/2}
\sim_u
\left(
\begin{pmatrix}
I & 0\\
0 & 0
\end{pmatrix},
\begin{pmatrix}
E_1 & 0\\
0 & 0
\end{pmatrix},
\ldots,
\begin{pmatrix}
E_g & 0\\
0 & 0
\end{pmatrix}
\right),
\]
for some $E = (E_1,\dots,E_g) \in SM(\R)^g$. 
We conclude that
\[
X\in K
\quad \mathrm{if \ and \ only \ if} \quad
L_E(X)\succeq0.
\]
That is, $K$ is equal to the free spectrahedron $\cD_E$. 

We now assume that $X \in SM_n (\R)^g$ is Arveson extreme in $K = \cD_E$. Using \cite[Lemma 2.1]{EH19}, to show that $(X,I,0) \in \cD_A$ is Arveson extreme in $\cD_A$, it is sufficient to show that if 
\[
\beta
=
(\beta_1,\ldots,\beta_{g+1},\beta_{g+2},\ldots,\beta_h) \in M_{n \times 1} (\R)^h,
\]
satisfies
\[
\ker L_A(X,I,0)
\subseteq
\ker\Lambda_A(\beta^T),
\]
then $\beta = 0$. To this end, set
\[
Y=(X,I,0).
\]
Again using \cite[Lemma 2.1]{EH19}, there exists a scalar $c>0$ and a $\gamma \in \R^h$ such that
\[
\begin{pmatrix}
Y & c\beta\\
c\beta^T & \gamma
\end{pmatrix}
\in\mathcal D_A.
\]
Hence, by choosing appropriate isometric restrictions to level $2$,
we obtain
\[
\begin{pmatrix}
Y(i,i) & c\beta(i)\\
 c\beta(i) & \gamma
\end{pmatrix}
\in\mathcal D_A(2),
\qquad
i=1,\ldots,n.
\]
It follows that
\[
\ker L_A(Y(i,i))
\subseteq
\ker\Lambda_A(c\beta(i))
=
\ker\Lambda_A(\beta(i)).
\]
As argued in the proof of \cite[Corollary 2.3]{E_EXTREMEPOINTS}, this kernel containment implies that there exists an $\epsilon > 0$ such that $Y(i,i) \pm \epsilon \beta(i) \in \cD_A(1)$. Furthermore, $Y(i,i) = (X(i,i),1,0)\in F$, so the facial structure of $F$ implies
\[
\beta_j(i)=0
\qquad
\text{for all } j\geq g+1.
\]
Since this holds for all $i=1,\ldots,n$, we obtain
\[
\beta_j=0
\qquad
\text{for } j\geq g+1.
\]

Now, set $\check{\beta} := (\beta_1,\dots,\beta_g)$. The containment
\[
\ker(I-A_{g+1})\subseteq\ker A_i \qquad \mathrm{for \ all \ }
i\leq g,
\]
together with the positivity of $L_A(X,I,0)$ allows us to conclude that
\[
\ker L_A(X,I,0)
\subseteq
\ker\Lambda_A(\check{\beta}^T,0),
\]
if and only if
\begin{align*}
\ker \left(L_{E}(X) \oplus 0\right)
&=
\ker\left[
\bigl((I-A_{g+1})^{\dagger/2}\otimes I_n\bigr)
L_A(X,I,0)
\bigl((I-A_{g+1})^{\dagger/2}\otimes I_n\bigr)
\right]\\
&\subseteq
\ker\left[
\bigl((I-A_{g+1})^{\dagger/2}\bigr)
\Lambda_A(\check{\beta}^T,0)
\bigl((I-A_{g+1})^{\dagger/2}\otimes I_n\bigr)
\right]\\
&=
\ker\left(\Lambda_{E}(\check{\beta}^T) \oplus 0\right).
\end{align*}
Thus
\beq
\label{eq:FaceKer}
\ker L_E(X)
\subseteq
\ker\Lambda_E(\check{\beta}^T).
\eeq
Finally, since
\[
X\in\partial^{\mathrm{Arv}}K = \partial^{\mathrm{Arv}}\cD_E,
\]
we can use \cite[Lemma 2.1]{EH19} with equation~\eqref{eq:FaceKer} to conclude that
$
\check{\beta}=0.
$
It follows that $\beta = 0$, from which we conclude that
\[
(X,I,0)\in\partial^{\mathrm{Arv}}(\mathcal D_A).
\]

It remains to show the reverse implication. To this end, assume that $X\in K (n)$ is not Arveson extreme in $K$. Then there exists some $0 \neq \beta \in M_{n \times 1} (\R)^g$ and $\gamma \in \R^g$ such that
\[
\begin{pmatrix}
X & \beta\\
\beta^T & \gamma
\end{pmatrix}
\in K, \qquad \mathrm{hence} \qquad
\left(
\begin{pmatrix}
X & \beta\\
\beta^T & \gamma
\end{pmatrix},
\begin{pmatrix}
I & 0\\
0 & 1
\end{pmatrix},
\begin{pmatrix}
0 & 0\\
0 & 0
\end{pmatrix}
\right)
\in\mathcal D_A.
\]
It follows that $(X,I,0)$ is not Arveson extreme in $\cD_A$. We conclude that
\[
X\in \partial^{\mathrm{free}}K
\quad\Longleftrightarrow\quad 
(X,I,0)\in \partial^{\mathrm{free}}\mathcal D_A.
\]

It remains to show that 
\[
X\in \partial^{\mathrm{free}}K \quad\Longleftrightarrow\quad
(X,I,0)\in \partial^{\mathrm{free}}\grlift{\cD_A}{F}.
\]
As in the first part of the proof, it is straightforward to show that $ (X,I,0)\in \partial^{\mathrm{free}}\grlift{\cD_A}{F}$ implies $X \in \free K$. On the other hand, if $X \in \free K$, then, as shown in the first part of the proof, $(X,I,0) \in \free \cD_A$. Since $(X,I,0) \in \grlift{\cD_A}{F} \subset \cD_A$, it follows from Lemma \ref{lem:ExtremeOfSubset} that $(X,I,0) \in \free \grlift{\cD_A}{F}.$
\end{proof}

\begin{corollary}
\label{cor:FaceLiftExtreme} 
Suppose $\cD_A\subseteq SM(\R)^h$ is a bounded real free
spectrahedron, and let $F$ be a $g$-dimensional face of $\cD_A(1)$. Then $X \in \grlift{\cD_A}{F}$ is a free extreme point of $\grlift{\cD_A}{F}$ if and only if $X$ is a free extreme point of $\cD_A$. 
\end{corollary}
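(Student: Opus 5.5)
The plan is to reduce Corollary \ref{cor:FaceLiftExtreme} to Theorem \ref{theorem:FaceLiftExtreme} by moving $F$ into standard position with a linear change of variables, and then reading off the conclusion from the theorem's parametrization of the graded lift.

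First I would fix the hypotheses needed for the standard-position machinery. Since $\cD_A$ is a free spectrahedron, $L_A(0)=I\succ 0$, so $0$ lies in the interior of $\cD_A(1)$. Boundedness of $\cD_A$ gives boundedness of $\cD_A(1)$, and $\cD_A(1)$ is closed and convex. Lemma \ref{lem:StandardPos} therefore applies and supplies an invertible linear map $T:\R^h\to\R^h$ such that $T(F)$ is in standard position relative to $T(\cD_A(1))$.

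Next I would transport everything along $T$, acting levelwise as a free linear map. Writing $T(x)_i=\sum_j t_{ij}x_j$, one has $T(\cD_A)=\cD_{B}$ with $B_j=\sum_i t_{ij}A_i$. Indeed, $L_A(X)=L_B(T^{-1}X)$ after substituting, so $T(\cD_A)$ is again a bounded real free spectrahedron with first level $T(\cD_A(1))$. The map $T$ commutes with conjugation by isometries and preserves direct sums, irreducibility and unitary equivalence. It therefore maps matrix convex combinations to matrix convex combinations bijectively, so $X\in\free K$ if and only if $T(X)\in\free T(K)$ for any matrix convex $K$. The paper also records that $T(\grlift{\cD_A}{F})=\grlift{T(\cD_A)}{T(F)}$. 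Combining these facts, it suffices to prove the corollary when $F$ is already in standard position.

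Now take $F$ in standard position. By the definition of $\grlift{\cD_A}{F}$ and the proposition describing $\affmat(F)$ as the matricial zero set of $1-x_{g+1},x_{g+2},\dots,x_h$, every element of $\grlift{\cD_A}{F}$ has the form $(X,I,0,\dots,0)$ with $X\in K$, where $K$ is the set defined in Theorem \ref{theorem:FaceLiftExtreme}. The equivalence of the second and third conditions in that theorem then gives exactly the claim. I expect no real obstacle here. The only point needing care is verifying that free extremality is invariant under the invertible linear change of variables, which is a routine check since $T$ is linear and commutes with $V^T(\cdot)V$.
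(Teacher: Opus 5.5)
Your proposal is correct and matches the paper's proof: you move $F$ to standard position via Lemma \ref{lem:StandardPos}, use that invertible linear maps preserve free extreme points and graded lifts, and then read off the equivalence of the last two conditions in Theorem \ref{theorem:FaceLiftExtreme}. One harmless slip: with $B_j=\sum_i t_{ij}A_i$, your identity $L_A(X)=L_B(T^{-1}X)$ gives $\cD_B=T^{-1}(\cD_A)$ rather than $T(\cD_A)$, so to realize $T(\cD_A)$ you should build $B$ from the entries of $T^{-1}$; this does not affect the argument.
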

\begin{proof}
    It is straightforward to show that invertible linear transformations preserve free extreme points of matrix convex sets. Thus, the result follows from Lemma \ref{lem:StandardPos} together with Theorem~\ref{theorem:FaceLiftExtreme}.
\end{proof}

\begin{corollary}
\label{cor:NoCrucialFree}
    Let $\mathcal{P} \subset \R^g$ be a closed bounded polytope. Suppose $\mathcal{P}$ has a two-dimensional face $F$ that is not a simplex. Then the extreme points of $F$ are not crucial free extreme points of $\mathcal{W}^{\max}(\mathcal{P})$. Furthermore, $\wmax(\mathcal{P})$ has infinitely many free extreme points modulo unitary equivalence. As a consequence, if $0$ is in the interior of $\mathcal{P}$, then $\wmin(\mathcal{P}^\bullet)$ is not a free spectrahedron. 
\end{corollary}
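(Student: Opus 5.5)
Corollary \ref{cor:NoCrucialFree} follows by reducing to the two-dimensional face $F$: we apply Corollary \ref{cor:NoCrucialG2} to the polygon $F$ and transport the result back to $\wmax(\mathcal{P})$ using the graded lift. First we normalize. Translate so that $0$ is in the interior of $\mathcal{P}$; translations and invertible linear maps preserve free extreme points, crucial free extreme points and maximality. Then apply Lemma \ref{lem:StandardPos} to put $F$ in standard position with $g=2$, so that $\aff(F)=\{x_3=1,\ x_j=0 \text{ for } j>3\}$. Since $0$ is interior, $\wmax(\mathcal{P})=\cD_A$ for a diagonal tuple $A$ and is a bounded real free spectrahedron. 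Let $K=\{X\in SM(\R)^2:(X,I,0)\in\grlift{\cD_A}{F}\}$.

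\textbf{Step 1: $K=\wmax(F)$.} The inclusion $K\subseteq\wmax(F)$ holds because $K(1)=F$. Conversely, take $X\in\wmax(F)$ and a facet inequality $\ell$ of $\mathcal{P}$. Restricted to $\aff(F)$, the functional $\ell$ becomes an affine functional in $y\in\R^2$ that is nonnegative on $F$. Hence it is nonnegative at $X$ as a matrix inequality, and so $\ell(X,I,0)\succeq 0$. Therefore $(X,I,0)\in\wmax(\mathcal{P})$.

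\textbf{Step 2: transport.} $F$ is a convex polygon that is not a triangle, so it has $d\ge4$ sides. By Theorem \ref{thm:PolygonExtreme}, $K=\wmax(F)$ has infinitely many non-unitarily-equivalent level-2 free extreme points $X^\phi$. By Theorem \ref{theorem:FaceLiftExtreme}, each $(X^\phi,I,0)$ is free extreme in $\wmax(\mathcal{P})$, and these points remain pairwise non-unitarily-equivalent. This gives infinitely many free extreme points.

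\textbf{Step 3: non-crucial vertices.} Let $v=(y,1,0)$ be a vertex of $F$. By Corollary \ref{cor:NoCrucialG2}, $y$ lies in the closure of matrix convex combinations of the level-2 free extreme points $X^j$ of $K$. The embedding $X\mapsto(X,I,0)$ commutes with matrix convex combinations, since $\sum V_\ell^T I V_\ell=I$, and it is continuous. Hence $v\in\overline{\comat\{(X^j,I,0)\}}$. Each $(X^j,I,0)$ is free extreme in $\wmax(\mathcal{P})$ by Theorem \ref{theorem:FaceLiftExtreme} and is not unitarily equivalent to $v$, since they lie at different levels. So $v$ is not crucial. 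The extreme points of $F$ are extreme points of $\mathcal{P}$, so they are free extreme at level 1 and the statement makes sense.

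\textbf{Step 4: polar dual.} Use the standard fact that free polar duality interchanges $\wmin(\mathcal{P}^\bullet)$ and $\wmax(\mathcal{P})$ when $0$ is interior, i.e.\ $\wmin(\mathcal{P}^\bullet)^\circ=\wmax(\mathcal{P})$. Combine it with the equivalence cited in the introduction: a bounded real free spectrahedron $\cD_B$ has free polar dual equal to $\comat$ of finitely many points, hence finitely many free extreme points, and conversely. Suppose $\wmin(\mathcal{P}^\bullet)=\cD_B$. Then $\wmax(\mathcal{P})=\cD_B^\circ$ is the matrix convex hull of $B$, a single tuple, so it has only finitely many free extreme points up to unitary equivalence. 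This contradicts Step 2.

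\textbf{Main obstacle.} I expect the justification of this polar-duality step to be the hardest part: showing that $\cD_B^\circ=\comat(B)$ has finitely many free extreme points, namely the irreducible summands of $B$. The fallback is the general fact that free extreme points of $\comat$ of a finite set lie among the irreducible compressions appearing in a decomposition of that set. Step 1 also needs care to make sure no facet of $\mathcal{P}$ other than those defining $F$ imposes extra matrix constraints on the slice, but the restriction argument given there handles this.
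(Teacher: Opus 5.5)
Your proposal is correct and follows the paper's proof. You identify $\grlift{\wmax(\mathcal{P})}{F}$ with $\wmax(F)$, apply Theorem \ref{thm:PolygonExtreme} and Corollary \ref{cor:NoCrucialG2} to the polygon $F$, transport back to $\wmax(\mathcal{P})$ via Theorem \ref{theorem:FaceLiftExtreme}, and deduce the polar-dual statement from the infinitude of free extreme points. The extra details you supply are sound: the facet-restriction check for $K=\wmax(F)$, pushing closures through $X\mapsto (X,I,0)$, and unpacking \cite[Theorem 1.2]{E_EXTREMEPOINTS} via $\wmin(\mathcal{P}^\bullet)^\circ=\wmax(\mathcal{P})$ and $\cD_B^\circ=\comat(B)$, whose free extreme points are among the irreducible summands of $B$.
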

\begin{proof}
    It is straightforward to check that $\grlift{\wmax(\mathcal{P})}{F} = \wmax(F)$. Thus, the fact that extreme points of $F$ are not crucial free extreme points follows immediately from Corollary \ref{cor:FaceLiftExtreme} together with Corollary \ref{cor:NoCrucialG2}. The fact that $\wmax(\mathcal{P})$ has infinitely many nonunitarily equivalent free extreme points follows from Theorem \ref{thm:PolygonExtreme} with Corollary \ref{cor:FaceLiftExtreme}. Finally, the fact that $\wmin(\mathcal{P}^\bullet)$ is not a free spectrahedron follows from the fact that $\wmax(\mathcal{P})$ has infinitely many free extreme points modulo unitary equivalence together with \cite[Theorem 1.2]{E_EXTREMEPOINTS}. 
\end{proof}

\subsection{Other notions of faces of matrix convex sets}
\label{sec:OtherFaces}
To conclude, we mention that various notions of faces of matrix convex sets have been considered in other works, including \cite{KKM+v1,KS22}. Our graded lifts of a face are not the same as the faces considered in either of these works. Indeed, the faces considered in \cite{KS22} lie at a fixed matrix level and are better suited to matrix extreme points than free extreme points. In particular, \cite[Proposition 4.4]{KS22} shows that a $C^*$ extreme point of such a face is a matrix extreme point of the matrix convex set. 

On the other hand, the first arXiv version of \cite{KKM+v1} considers dimension-free faces of matrix convex sets. However, our graded lifts can fail to be free faces in this sense. A notable contrast here is \cite[Example 9.9]{KKM+v1}, which shows that a free extreme point of a free face can fail to be a free extreme point of the full matrix convex set. This striking difference may be related to the difference between free faces and graded lifts of faces; however, it may also be related to the difference between real free spectrahedra and general matrix convex sets. As previously discussed, the extreme points of the former are much better behaved than the extreme points of the latter \cite{E_EXTREMEPOINTS,EH19,K+,Pas22}.

\printbibliography

@article{ACL_ELLIPSES,
title = {Duality and Inscribed Ellipses},
journal = {Computational Methods and Function Theory},
volume = {15},
pages = {635--644},
year = {2015},
author = {Agarwal, M. and Clifford, J. and Lachance, M.},
}

@article{E_QUADRILATERAL,
   title={The Arveson boundary of a free quadrilateral is given by a noncommutative variety},
   ISSN={1846-3886},
   number={15},
   journal={Operators and Matrices},
   publisher={Element d.o.o.},
   author={Evert, Eric},
   year={2021},
   pages={1351–1378}
}

@article{E_EXTREMEPOINTS,
   title={Extreme Points of Matrix Convex Sets, Free Spectrahedra, and Dilation Theory},
   volume={28},
   ISSN={1559-002X},
   number={2},
   journal={The Journal of Geometric Analysis},
   publisher={Springer Science and Business Media LLC},
   author={Evert, Eric and Helton, J. William and Klep, Igor and McCullough, Scott},
   year={2018},
   month=May, pages={1373–1408}
}

@article{KS22,
title = {Facial structure of matrix convex sets},
journal = {Journal of Functional Analysis},
volume = {283},
number = {7},
pages = {109601},
year = {2022},
issn = {0022-1236},
author = {Igor Klep and Tea Štrekelj},
}

@misc{KKM+v1,
  author        = {Kennedy, Matthew and Kim, Se-Jin and Manor, Nicholas},
  title         = {Nonunital operator systems and noncommutative convexity},
  year          = {2021},
  eprint        = {2101.02622v1},
  archivePrefix = {arXiv},
  primaryClass  = {math.OA}
}

@article{Passer_2019,
   title={Compressions of compact tuples},
   volume={564},
   ISSN={0024-3795},
   journal={Linear Algebra and its Applications},
   publisher={Elsevier BV},
   author={Passer, Benjamin and Shalit, Orr Moshe},
   year={2019},
   month=Mar, pages={264–283}
}

@article{ A69  ,
	author       = { W. Arveson },
	title        = { Subalgebras of ${C}^*$-algebras },
	year         = { 1969 },
	journal      = { Acta Math },
	volume       = { 123 },
	pages        = { 141-224 }
}

@article{ DK15 ,
	author       = { K.R. Davidson and M. Kennedy },
	title        = { The {C}hoquet boundary of an operator system },
	year         = { 2015 },
	journal      = { Duke Math. J. },
	volume       = { 164 },
	pages        = { 2989-3004 }
}

@book{DK25,
  author    = {Davidson, Kenneth R. and Kennedy, Matthew},
  title     = {Noncommutative Choquet Theory},
  series    = {Memoirs of the American Mathematical Society},
  volume    = {316},
  number    = {1608},
  publisher = {American Mathematical Society},
  year      = {2025}
}

@article{ DDSS17 ,
	author       = { K.R. Davidson and A. Dor-On and M. O. Shalit and B. Solel },
	title        = { Dilations, inclusions of matrix convex sets, and completely positive maps },
	journal      = { Int. Math. Res. Not. },
	volume       = { 13 },
	year         = { 2017 },
	pages        = { 4069-4130 }
}

@article{BM25II,
author = {Blecher, David P. and McClure, Caleb Becker},
title = {Real Noncommutative Convexity. II: Extremality and Noncommutative Convex Functions},
journal = {Mathematische Nachrichten},
volume = {299},
number = {6},
pages = {1280-1305},
year = {2026}
}

@article{ EW97 ,
	author       = { E.G. Effros and S. Winkler },
	title        = { Matrix convexity: operator analogues of the bipolar and {H}ahn-{B}anach theorems },
	journal      = { J. Funct. Anal. },
	volume       = { 144 },
	year         = { 1997 },
	pages        = { 117-152 }
}

@article{Evert17,
	label = {  Evert18  },
	author       = { E. Evert },
	title        = { Matrix convex sets without absolute extreme points },
	journal      = { Linear Algebra Appl. },
	volume       = { 537 },
	year         = { 2018 },
	pages        = { 287-301 }
}

@article{  EH19 ,
	author       = { E. Evert and J.W. Helton },
	title        = { {A}rveson extreme points span free spectrahedra },
	journal      = { Math. Ann. },
	volume       = { 375 },
	year         = { 2019 },
	pages        = { 629-653 }
}

@Inbook{EPS26,
author="Evert, Eric
and Passer, Benjamin
and {\v{S}}trekelj, Tea",
editor="Alpay, Daniel
and Colombo, Fabrizio
and Sabadini, Irene",
title="Extreme Points of Matrix Convex Sets and Their Spanning Properties",
bookTitle="Operator Theory",
year="2026",
publisher="Springer Nature Switzerland",
address="Cham",
pages="2643--2675",
}

@Article{EEHK22,
  author={Epperly, A. and Evert, E. and  Helton, J.W. and Klep, I.},
title = {Matrix extreme points and free extreme points of free spectrahedra},
journal = {Optimization Methods and Software},
volume = {39},
number = {6},
pages = {1263--1308},
year = {2024},
}

@book{BV04,
  author    = {Stephen Boyd and Lieven Vandenberghe},
  title     = {Convex Optimization},
  publisher = {Cambridge University Press},
  year      = {2004}
}

@article{EP25,
title = {Matrix convex sets over the Euclidean ball and polar duals of real free spectrahedra},
journal = {Linear Algebra and its Applications},
volume = {736},
pages = {54-83},
year = {2026},
author = {Eric Evert and Benjamin Passer}
}

@incollection{Wit84,
title = {On Matrix Order and Convexity},
editor = {Klaus-Dieter Bierstedt and Benno Fuchssteiner},
series = {North-Holland Mathematics Studies},
publisher = {North-Holland},
volume = {90},
pages = {175-188},
year = {1984},
booktitle = {Functional Analysis: Surveys and Recent Results III},
author = {Gerd Wittstock}
}

@article{FNT17 ,
	author       = { T. Fritz and T. Netzer and A. Thom },
	title = {Spectrahedral Containment and Operator Systems with Finite-Dimensional Realization},
    journal = {SIAM Journal on Applied Algebra and Geometry},
    volume = {1},
    number = {1},
    pages = {556-574},
    year = {2017}
}

@article{HL21, 
	author       = { M. Hartz and M. Lupini },
	title        = { Dilation theory in finite dimensions and matrix convexity },
	journal      = { Isr. J. Math. },
	volume       = { 245 },
	year         = { 2021 },
	pages        = { 39-73 }
}

@article{ HM12 ,
	author       = { J.W. Helton and S. McCullough },
	title        = { Every free basic convex semi-algebraic set has an {LMI} representation },
	journal      = { Ann. of Math. },
	volume       = { 176 },
	year         = { 2012 },
	pages        = { 939-1013 },
	number       = { 2 },
}

@article{ HKM16 ,
	author       = { J.W. Helton and I. Klep and S. McCullough },
	title        = { Matrix Convex hulls of free semialgebraic sets },
	journal      = { Trans. Amer. Math. Soc. },
	volume       = { 368 },
	year         = { 2016 },
	pages        = { 3105-3139 }
}

@article{BEKMN+,
  title={Inclusion constants for free spectrahedra with applications to quantum incompatibility},
  author={Bluhm, A. and Evert, E. and Klep, I. and Magron, V. and Nechita, I.},
  journal =	 {arXiv},
  year = {2025},
  url = {https://arxiv.org/abs/2512.17706},
}

@article{KPTT13,
  author  = {Kavruk, Ali S. and Paulsen, Vern I. and Todorov, Ivan G. and Tomforde, Mark},
  title   = {Quotients, exactness, and nuclearity in the operator system category},
  journal = {Advances in Mathematics},
  volume  = {235},
  year    = {2013},
  pages   = {321--360}
}

@article{ KLS14  ,
	author       = { C. Kleski },
	title        = { Boundary representations and pure completely positive maps },
	journal      = { J. Operator Theory },
	volume       = { 71 },
	year         = { 2014 },
	pages        = { 45-62 }
}

@article{ K+  ,
	author       = { Kriel, TL },
	title        = { An Introduction to Matrix Convex Sets and Free Spectrahedra },
	journal      = { Complex Anal. Oper. Theory },
	volume       = { 13 },
	year         = { 2019 },
	pages        = { 3251-3335 }
}

@article{Netzer26,
  author  = {Netzer, Tim},
  title   = {Subhomogeneity and Arveson Boundary of Free Polyhedra},
  journal = {arXiv preprint arXiv:2608.04158},
  year    = {2026}
}

@incollection{dOHMP09,
	author       = {M. de Oliveira and J.W. Helton and S. McCullough and M. Putinar},
	title        = {Engineering systems and free semi-algebraic geometry},
	year         = {2009},
	booktitle    = {Emerging applications of algebraic geometry},
	publisher    = {Springer-Verlag},
	pages        = {17-61},
	editor       = {M. Putinar and S. Sullivant}
}

@article{PSS18,
title = {Minimal and maximal matrix convex sets},
journal = {Journal of Functional Analysis},
volume = {274},
number = {11},
pages = {3197-3253},
year = {2018},
author = {B. Passer and O. Shalit and B. Solel}
}

@article{ RG95 ,
	author       = { M. Ramana and A.J. Goldman },
	title        = { Some geometric results in semidefinite programming },
	journal      = { J. Global Optim. },
	volume       = { 7 },
	year         = { 1995 },
	pages        = { 33-50 }
}

@article{ WW99 ,
	author       = { C. Webster and S. Winkler },
	title        = { The {K}rein-{M}ilman Theorem in Operator Convexity },
	journal      = { Trans Amer. Math. Soc. },
	volume       = { 351 },
	year         = { 1999 },
	pages        = { 307-322 }
}

@article {Pas22,
    AUTHOR = {Passer, B.},
     TITLE = {Complex free spectrahedra, absolute extreme points, and
              dilations},
   JOURNAL = {Doc. Math.},
  FJOURNAL = {Documenta Mathematica},
    VOLUME = {27},
      YEAR = {2022},
     PAGES = {1275--1297},
      ISSN = {1431-0635},
   MRCLASS = {47A20 (46L07 47A13)},
  MRNUMBER = {4466985},
}

@article {HKMS19,
    AUTHOR = {Helton, J.W. and Klep, I. and McCullough, S. and
              Schweighofer, M.},
     TITLE = {Dilations, linear matrix inequalities, the matrix cube problem
              and beta distributions},
   JOURNAL = {Mem. Amer. Math. Soc.},
  FJOURNAL = {Memoirs of the American Mathematical Society},
    VOLUME = {257},
      YEAR = {2019},
    NUMBER = {1232},
     PAGES = {vi+106},
      ISSN = {0065-9266},
      ISBN = {978-1-4704-3455-7; 978-1-4704-4947-6},
   MRCLASS = {47A20 (33B15 46L07 60E05 90C22)},
  MRNUMBER = {3898991}
}

@article {BN18,
    AUTHOR = {Bluhm, A. and Nechita, I.},
     TITLE = {Joint measurability of quantum effects and the matrix diamond},
   JOURNAL = {J. Math. Phys.},
  FJOURNAL = {Journal of Mathematical Physics},
    VOLUME = {59},
      YEAR = {2018},
    NUMBER = {11},
     PAGES = {112202, 27},
      ISSN = {0022-2488},
   MRCLASS = {81P15},
  MRNUMBER = {3880587}
}

@article {CDN20,
    AUTHOR = {De las Cuevas, G. and Drescher, T. and Netzer, T.},
     TITLE = {Quantum magic squares: dilations and their limitations},
   JOURNAL = {J. Math. Phys.},
  FJOURNAL = {Journal of Mathematical Physics},
    VOLUME = {61},
      YEAR = {2020},
    NUMBER = {11},
     PAGES = {111704, 15},
      ISSN = {0022-2488},
   MRCLASS = {15B51 (16S50 81P45)},
  MRNUMBER = {4174414}
}

@article{CN21,
  author  = {De les Coves, G. and Netzer, T.},
  title   = {Quantum Information Theory and Free Semialgebraic Geometry:
             One Wonderland Through Two Looking Glasses},
  journal = {Internationale Mathematische Nachrichten},
  year    = {2021},
  number  = {246},
  pages   = {1--29}
}

\clearpage
\markboth{ERIC EVERT, JACK GRAHAM}
{GRADED FACE LIFTS AND FREE EXTREME POINTS OF FREE SPECTRAHEDRA}
\appendix
\section{}

The appendix contains proofs of Propositions \ref{prop:EllipseFacts}, \ref{prop:Maximald2}, and \ref{prop:SquareExtreme}.

\subsection{Proof of Proposition \ref{prop:EllipseFacts}}
\label{sec:EllipseFactsProof}

\begin{proof}
 The fact that there exists a unique family of ellipses inscribed in the square is a consequence of \cite[Theorem 3.1]{ACL_ELLIPSES}. We can take the prescribed point of contact on a single side of the square to be $(1,\cos\theta)$ for $0<\theta<\pi$. We will show that the equation
    \[(\csc^2\theta)x^2+(-2\csc^2\theta\cos\theta)xy+(\csc^2\theta)y^2=1\] describes this unique family. The equation describes a conic centered at 0. A conic with equation $Ax^2+Bxy+Cy^2=1$ is an ellipse if and only if $A>0$ and $4AC-B^2>0$. Plugging our values in, we see this holds, so our equation describes an ellipse. Multiplying both sides by $\sin^2\theta$ shows it is equivalent to the equation in the statement of the proposition. Plugging in $x=1$ and solving for $y$ yields the point $(1,\cos\theta)$, so this touches the square at the prescribed point. Using implicit differentiation, we can find that each ellipse of this form is tangent to the square at four points. Since this equation describes a unique inscribed ellipse for each point of contact on one side of the square, it must be the one-parameter family of inscribed ellipses.

    Plugging in $x=-1, y= 1$, and $y=-1$, we find that the other three tangency points are $(-1,-\cos\theta), (\cos\theta,1)$, and $(-\cos\theta,-1)$. Notice that these are symmetric about the diagonals of the square. If we were to reflect an ellipse about these diagonals, we would get another ellipse with the same tangency points. Since one tangent point is enough to define a unique ellipse inscribed in the square, this is the same ellipse. Thus, the ellipse is symmetric about the diagonals of the square, and so its major and minor axes are subsets of the diagonals of the square.

    We can find the lengths of the semi-axes by first finding what points of the ellipse lie on $y=x$ and $y=-x$. Plugging in $y=x$ and solving for $x$, we find
    \[x^2=\frac{\sin^2\theta}{2-2\cos\theta}.\]
    Simplifying and using a power-reduction identity gives us $\pm(\cos\left(\frac{\theta}{2}\right),\cos\left(\frac{\theta}{2}\right))$ as the intersection points. Likewise, $y=-x$ yields $\pm(\sin\left(\frac{\theta}{2}\right),-\sin\left(\frac{\theta}{2}\right))$. Their distances from 0 are $\sqrt2\cos\left(\frac{\theta}{2}\right)$ and $\sqrt{2}\sin\left(\frac{\theta}{2}\right),$ respectively. Thus these are the lengths of the major and minor semi-axes (depending on which is larger for the given choice of $\theta$).
    \end{proof}

\subsection{Proof of Proposition \ref{prop:Maximald2}}

\label{sec:Maximald2Proof}

\begin{proof}
    Set
    \[
F:= \left(\begin{pmatrix}
        1 & 0 \\
        0 & -1
    \end{pmatrix},\begin{pmatrix}
        0 & 1 \\
        1 & 0
    \end{pmatrix}\right).
\]
     Since $X$ is irreducible, one must have that $\{I_2,X_1,X_2\}$ forms a basis for $SM_2(\R)$, hence there exist parameters $\alpha_0,\alpha_1,\alpha_2$ and $\beta_0,\beta_1,\beta_2$ such that 
    \[
    (\alpha_0 I_2 +\alpha_1 X_1+\alpha_2 X_2, \beta_0 I_2 +\beta_1 X_1+\beta_2 X_2) = F. 
    \]
    Define the map $T:SM(\R)^2 \to SM(\R)^2$ by 
    \[
    T(Y) =  (\alpha_0 I_n +\alpha_1 Y_1+\alpha_2 Y_2, \beta_0 I_n +\beta_1 Y_1+\beta_2 Y_2)
    \]
    for $Y \in SM_n (\R)^2$. Thus $T$ is a free affine linear map in the sense of \cite{E_QUADRILATERAL}. Moreover, one can verify that $T$ is invertible. It is also straightforward to verify that free affine linear maps respect matrix convex combinations. That is, if $\sum_{j=1}^k V_j^T V_j = I$, then
    \[
    T\left(\sum_{j=1}^k V_j^T X^jV_j\right)= \sum_{j=1}^k V_j^T T(X^j) V_j.
    \]
    It follows that
    \[
    \comat(X) =  \comat(T^{-1}(F)) = T^{-1} (\comat(F)) = T^{-1} (\wmax(\mathbb{B}^2)),
    \]
    where $\mathbb{B}^2$ is the closed Euclidean ball. See, e.g., \cite{HKMS19} for the equality $\comat(F)=\wmax(\mathbb{B}^2)$.  It is also straightforward to verify that invertible free affine linear maps preserve maximality of matrix convex sets. We conclude that $\comat(X)(1)$ is an ellipse and that $\comat(X)$ is the maximal matrix convex set over that ellipse. 
\end{proof}

\subsection{Proof of Proposition \ref{prop:SquareExtreme}}
\label{sec:SquareExtremeProof}

\begin{proof}
    
   From \cite{E_EXTREMEPOINTS}, we have that $X=(X_1,X_2)$ is a free extreme point of $\mathcal{C}$ if and only if $X_1^2 = X_2^2 = I$ and $X$ is irreducible. It is straightforward to check that $X \in SM_2(\R)^2$ satisfies these conditions if and only if $X$ is unitarily equivalent to a tuple of the form
    \[
    Y=    \left(\begin{pmatrix}
        1 & 0 \\
        0 & -1
    \end{pmatrix},\begin{pmatrix}
        \cos{\theta} & \sin{\theta} \\
        \sin{\theta} & -\cos{\theta}
    \end{pmatrix}\right)
    \]
    for some $\theta$ such that $\sin(\theta) \neq 0$. Set $E:=\comat(X)(1)$. Proposition \ref{prop:Maximald2} shows that $E$ is an ellipse and that $\comat(X) = \wmax(E)$. Furthermore, $E \subset \mathcal{C}(1)$, since $X \in \mathcal{C}$. To complete the forward direction of the proof, we need only show that $E$ is inscribed in $\mathcal{C}(1)$.

    To this end, observe that by compressing to the top-left and bottom-right corners of $Y$, one has that $(1,\cos(\theta)),(-1,-\cos(\theta)) \in \comat(X).$ Similarly, one has that $Y$ is unitarily equivalent to the tuple 
    \[
    \left(\begin{pmatrix}
        \cos{\theta} & -\sin{\theta} \\
        -\sin{\theta} & -\cos{\theta}
    \end{pmatrix},
      \begin{pmatrix}
        1 & 0 \\
        0 & -1
    \end{pmatrix}\right),
    \]
    so again considering compressions shows $(\cos(\theta),1),(-\cos(\theta),-1) \in \comat(X)$. It follows that $E$ is indeed inscribed in $\mathcal{C}(1)$. 

    For the reverse direction, suppose $X \in SM_2(\R)^2$ and that $\comat(X)(1)$ is an ellipse inscribed in $\mathcal{C}(1)$. Note that this implies that $X$ is irreducible. Indeed, if $X \in SM_2 (\R)^2$ is reducible, then $\comat(X)(1)$ is a line segment. Using Proposition \ref{prop:Maximald2} then shows that $\comat(X)$ is the maximal matrix convex set over $\comat(X)(1)$. 
    
    Now, from Proposition \ref{prop:EllipseFacts}, every ellipse inscribed in the square is determined by a point on the boundary of the form $(1,\cos(\theta))$, and since the matrix convex hulls of irreducible tuples are equal if and only if the tuples are unitarily equivalent, we obtain that $X$ must be unitarily equivalent to 
  \[
 \left(\begin{pmatrix}
        1 & 0 \\
        0 & -1
    \end{pmatrix},\begin{pmatrix}
        \cos{\theta} & \sin{\theta} \\
        \sin{\theta} & -\cos{\theta}
    \end{pmatrix}\right).
    \]
    It follows from \cite{E_EXTREMEPOINTS} that $X$ is a free extreme point of the square. 

    Finally, to complete the proof, note that if $E_1 \neq E_2$ are ellipses inscribed in the square and $X^1$ and $X^2$ are tuples in $SM_2(\R)^2$ such that
    \[
    \comat(X^j)(1) = E_j \qquad \mathrm{for\ } j=1,2,
    \]
 then $X^1$ and $X^2$ cannot be unitarily equivalent. Thus we obtain a natural correspondence between the family of ellipses inscribed in $\mathcal{C}(1)$ and a family of nonunitarily equivalent free extreme points at level $2$ of $\mathcal{C}$. 
\end{proof}

 \Addresses 

\end{document}